\documentclass[12pt]{article}
\usepackage{tabularx} 
\usepackage{amsmath}
\usepackage{amsthm} 
\usepackage{graphicx} 
\usepackage[margin=1in,letterpaper]{geometry} 
\usepackage{cite} 
\usepackage[final]{hyperref} 
\hypersetup{
	colorlinks=true,       
	linkcolor=blue,        
	citecolor=blue,        
	filecolor=magenta,     
	urlcolor=blue
}
\usepackage{cite}
\usepackage{blindtext}
\numberwithin{equation}{section}
\usepackage{amssymb}
\usepackage{mathrsfs}
\usepackage{amsmath}
\newtheorem{theorem}{Theorem}[section]
\newtheorem{claim}{Claim}
\newtheorem{corollary}{Corollary}[section]

\newtheorem{lemma}{Lemma}[section]
\newtheorem{Remark}{Remark}[section]

\parskip 6pt

\begin{document}
\bibliographystyle{plain}
\title{Linear maps preserving $(p,k)$ norms of tensor products of matrices}
\author{Zejun Huang\thanks{ College of Mathematics and Statistics, Shenzhen University, Shenzhen 518060, China.  (Email: mathzejun@gmail.com) } , Nung-Sing Sze\thanks{
Department of Applied Mathematics,
The Hong Kong Polytechnic University,
Hung Hom, Hong Kong.
(Email: raymond.sze@polyu.edu.hk)}
, Run Zheng\thanks{
Department of Applied Mathematics,
The Hong Kong Polytechnic University,
Hung Hom, Hong Kong.
(Email: zheng-run.zheng@connect.polyu.hk )}
}  
\date{ }
\maketitle

\begin{abstract}
Let $m,n\ge 2$ be integers. Denote by $M_n$ the set of $n\times n$ complex matrices.
Let $\|\cdot\|_{(p,k)}$ be the $(p,k)$ norm on $M_{mn}$ with $1\leq k\leq mn$ and $2<p<\infty$.  We show that a  linear map $\phi:M_{mn}\rightarrow M_{mn}$ satisfies
$$\|\phi(A\otimes B)\|_{(p,k)}=\|A\otimes B\|_{(p,k)} {\rm\quad for~ all\quad}A\in M_m {\rm ~and ~}B\in M_n$$   if and only if there exist unitary matrices $U,V\in M_{mn}$ such that 
$$\phi(A\otimes B)=U(\varphi_1(A)\otimes  \varphi_2(B))V  {\rm\quad for~ all\quad}A\in M_m {\rm~ and~ }B\in M_n,$$ where $\varphi_s$ is the identity map or the transposition map  $X\to X^T$ for $s=1,2$.  The result is also extended to multipartite systems.
\end{abstract}
\noindent{\bf Key words:}  linear preserver, Ky Fan $k$-norm, Schatten $p$-norm,  $(p,k)$ norm, tensor product

\noindent{\bf AMS classifications:} 15A69, 15A86, 15A60

\section{Introduction}
Throughout this paper, we denote by $M_{m,n}$ and $M_n$ the set  of  $m\times n$ and $n\times n$ complex matrices, respectively. Denote by $H_n$ the set of all $n\times n$ Hermitian  matrices. For two matrices $A=(a_{ij})\in M_m$ and $B\in M_n$, their tensor product is defined to be $A\otimes B=(a_{ij}B)$, which is an $mn\times mn$ matrix. We denote by $$M_m\otimes M_n=\{A\otimes B: A\in M_m, B\in M_n\}$$ and
$$ H_m\otimes H_n=\{A\otimes B: A\in H_m, B\in H_n\}.$$

 Suppose $A\in M_{m,n}$. The singular values of $A$ are always denoted in decreasing order by
 $s_1(A)\geq \cdots\geq s_{\ell}(A)$, where $\ell=\min\{m,n\}$. Given a real number  $p\geq 1$ and a positive integer  $k\leq \min\{m,n\}$, the $(p,k)$-norm of $A$ is defined by
$$\|A\|_{(p,k)}=\left[\sum\limits_{i=1}^ks_i^p(A)\right]^{
\frac{1}{p}}.$$ 
{The $(p,k)$-norm, also known as the Ky Fan (p,k)-norm,  was first recongnized as a special class of unitarily invariant norms in the study of isometries by Grone and Marcus \cite{GM1977} in their notable work from the 1970s.
The $(p,k)$-norms encompass many commonly used norms.  
For instance, the $(1,k)$-norm reduces to the Ky Fan $k$-norm 
while the $(p,K)$-norm, with $K = \min\{m,n\}$, reduces to Schatten $p$-norm.
Moreover, the Ky Fan $1$-norm, Ky Fan $K$-nrom, and Shatten $2$-norm are also known as  the spectral norm., the trace norm, and the Frobenius norm, respectively. 
Some earlier works exploring the fundamental properties of the $(p,k)$-norm can be found in references \cite{JC1987, kidman1983,MKS1984}.
}

{
In addition to being a generalation of many well-known norms, 
the $(p,k)$-norm itself has attracted extensive attention from researchers across various fields,  particularly in 
the study of low-rank approximation, e.g., \cite{DV2016, JLQD, TN2014}.
The application of the $(p,k)$-norm in quantum information science has also gained recent attention. 
Researchers in this field have explored the concept of the twisted commutators of two unitaries and focused on determining the minimum norm value of these twisted commutators. 
The authors in \cite{CF2017} succeeded in obtaining an explicit closed form for the minimum twisted commutation value with respect to the $(p,k)$-norm. 
All these show the growing importance and relevance of the $(p,k)$-norm across various fields of study.
}

 Linear preserver problems concern the study of linear maps on matrices or operators preserving certain special properties. Since Frobenius gave the characterization of linear maps on $M_n$ that preserve the determinant of all matrices in 1897, a lot of  linear preserver problems have been investigated; see \cite{LP, LM} and their references.

{
The study of linear preservers on various matrix norms  have been extensively explored since Schur \cite{S1925} characterized linear maps on $M_n$ that preserve the spectral norm.
This was followed by a series of subsequent results
\cite{A1975,G1980,GM1977,LT1988,R1969,R19692}.
Notably,  Li and Tsing \cite{LT1988} 
provided a complete characterization of linear maps that preserve the $(p,k)$-norms.
They showed that linear maps on $M_{m,n}$ that preserve the $(p,k)$-norms (except for the Frobenius norm) have the form
$$A\mapsto UAV \quad\hbox{or}\quad A\mapsto UA^TV   \text{\quad when\quad} m=n $$
for some unitary matrices $U\in M_m$ and $V\in M_n.$
}

Traditional linear preserver problems deal with linear 
maps  preserving certain properties of every matrix in 
the whole matrix space  $M_n$ or $H_n$. 
Recently, linear maps on $M_{mn}$  or $H_{mn}$  only preserving certain properties of matrices in $M_m\otimes M_n$  or $H_m\otimes H_n$ have been investigated. 
{Friedland et al. \cite{FLTS2011} provided a characterization of linear maps on $H_m\otimes H_n$ that preserve the set of separable states in bipartite systems. The concept of separability is widely recognized as a fundamental and crucial aspect in the field of quantum information science.  
Johnston in his paper \cite{J2011} examined invertible linear maps on $M_m\otimes M_n$ that preserve the set of rank one matrices with bounded Schmidt rank in both row and column spaces.
Additionally, the author investigated linear maps on $M_m\otimes M_n$ that preserve
the Schmidt $k$-norm,  a norm induced by states with bounded Schmidt rank,
which finds extensive application in the field of quantum information.
For more details on the the Schmidt $k$-norm, refer to \cite{JK2010}.}

Note that $M_m\otimes M_n$ and  $H_m\otimes H_n$ are small subsets of $M_{mn}$ and $H_{mn}$.  Researchers know much less information on such linear maps.  So it is more difficult to characterize such linear maps. Along this line, 
linear maps on Hermitian matrices preserving the spectral radius  were determined in \cite{FHLS2}. {Linear maps on complex matrices or Hermitian matrices preserving  determinant were studied in \cite{CK2020, DC2016, DF2017}}.  Linear maps on complex matrices preserving numerical radius, $k$-numerical range, product numerical range and rank-one matrices were characterized in \cite{FHLS1,FHLS3, HSS, LTS2013}. 

In \cite{FHLS4}, the authors characterized linear maps on $M_{mn}$ preserving the Ky Fan $k$-norm   and the Schatten $p$-norm of the tensor products $A\otimes B$ for all  $A\in M_m$ and $B\in M_n$. 
{Despite the non-obvious connection to the field of quantum information, from a mathematical perspective,  it is undeniably intriguing to consider the linear maps that preserve the $(p,k)$-norm of tensor products of matrices.} 
Therefore, in this paper, we aim to characterize linear maps $\phi$ on $M_{mn}$ such that for $p>2$ and $1\leq  k\leq mn$,
\begin{equation}\label{h717}
\|\phi(A\otimes B)\|_{(p,k)}=\|A\otimes B\|_{(p,k)}\quad \hbox{for all\quad}A\in M_m\hbox{ and }B\in M_n.
\end{equation}
{The comprehensive characterization in the bipartite systems will be presented in Section 2, while in Section 3, we will extend the results to multipartite systems.}

\section{Bipartite system}
 The linear maps on $M_{mn}$  satisfying (\ref{h717}) are determined by the following theorem.
\begin{theorem}\label{CH3theo1}
Let $m,n \geq 2 $ be integers.   Given a real number $p>2$ and a positive  integer $ k \leq mn$,
a linear map $\phi: M_{mn}\to M_{mn}$ satisfies
\begin{equation}\label{CH3t1}
\|\phi(A\otimes B)\|_{(p,k)}=\|A\otimes B\|_{(p,k)} \quad \hbox{for all\quad $A\in M_m$ and $ B\in M_n$,}
\end{equation}
if and only if there exist unitary matrices $U,V \in M_{mn}$ such that
\begin{equation}\label{CH3t2}
\phi(A\otimes B)=U(\varphi_1(A)\otimes \varphi_2(B))V \quad \hbox{for all\quad  $A\in M_m$ and $ B\in M_n$,}
\end{equation}
where $\varphi_s$ is the identity map or  the transposition map $X\mapsto X^T$ for $s=1,2$.
\end{theorem}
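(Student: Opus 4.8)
The sufficiency is immediate and I would dispose of it first: transposition preserves every singular value, so $s_i(\varphi_1(A)\otimes\varphi_2(B))=s_i(A\otimes B)$ for all $i$, and multiplication by the fixed unitaries $U,V$ leaves every unitarily invariant norm unchanged; hence any $\phi$ of the form \eqref{CH3t2} satisfies \eqref{CH3t1}. All the work is in the necessity direction, and my plan is to reduce it, via the tensor structure, to the classification of $(p,k)$-norm isometries of Li and Tsing applied to each factor separately.

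I would begin by recording the singular-value behaviour of product tensors against a rank-one factor. If $B_0=uv^*\in M_n$ with $\|u\|=\|v\|=1$, then writing an SVD of $A$ shows that $A\otimes B_0$ has precisely the nonzero singular values of $A$, so $\|A\otimes B_0\|_{(p,k)}=\|A\|_{(p,k_0)}$ with $k_0=\min\{k,m\}$; symmetrically, against a rank-one $A_0$ the norm reduces to $\|B\|_{(p,\min\{k,n\})}$. Consequently, for each fixed rank-one factor the induced linear map on the other factor, $A\mapsto\phi(A\otimes B_0)$ (respectively $B\mapsto\phi(A_0\otimes B)$), is an isometric embedding of $M_m$ (respectively $M_n$), equipped with a genuine $(p,k)$-type norm, into $(M_{mn},\|\cdot\|_{(p,k)})$. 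Exploiting the freedom in $U,V$, I would normalise one such slice and then aim to show that every such slice map is forced into the rigid shape $X\mapsto U'\varphi(X)V'$.

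The decisive and most delicate step, where the hypothesis $p>2$ enters, is to prove that $\phi$ carries rank-one product tensors to rank-one matrices. For this I would use the strict convexity of the Schatten geometry for $p>2$: the equality cases of the Clarkson--McCarthy inequalities force matrices with mutually orthogonal column and row supports to be sent to matrices again having orthogonal supports, and combined with the two families of slice identities above this pins down both the rank and the singular-value multiplicities of $\phi(A\otimes B)$. Once rank-one tensors map to rank-one matrices, the bilinear map $(A,B)\mapsto\phi(A\otimes B)$ factors through the two tensor legs, producing factorwise linear maps $\varphi_1$ on $M_m$ and $\varphi_2$ on $M_n$ that are surjective $(p,k)$-norm isometries. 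Since $p>2$, none of these is the Frobenius norm, so the Li--Tsing theorem gives $\varphi_s(X)=U_s\widetilde\varphi_s(X)V_s$ with $\widetilde\varphi_s$ the identity or the transpose; reassembling and absorbing the local unitaries into $U=U_1\otimes U_2$ and $V=V_1\otimes V_2$ (allowing an overall coordinate swap when $m=n$, which is itself of the form $X\mapsto UXV$) yields \eqref{CH3t2}.

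The principal obstacle is the truncation to the top $k$ singular values: unlike the Schatten norm (the case $k=mn$), the $(p,k)$-norm is \emph{not} multiplicative on tensor products, so the clean identity $\|A\otimes B\|=\|A\|\,\|B\|$ is unavailable and the effective factor norms depend on $k$ through $\min\{k,m\}$ and $\min\{k,n\}$. I expect the hardest parts to be (i) a case analysis organised by the size of $k$ relative to $m$, $n$ and $mn$, and (ii) the consistency argument that glues the two families of slice isometries---indexed over all rank-one $A_0$ and all rank-one $B_0$---into a single product decomposition, while using the $p>2$ rigidity to exclude any ``mixing'' between the two tensor factors. The isometric-embedding (rather than onto-isometry) nature of the individual slices must also be handled with care before Li--Tsing can be invoked on the reconstructed factorwise maps.
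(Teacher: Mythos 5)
Your frame---sufficiency via unitary invariance, the slice identity $\|A\otimes B_0\|_{(p,k)}=\|A\|_{(p,\min\{k,m\})}$ for rank-one $B_0$, and the final reduction to the Li--Tsing classification---agrees with the paper's endgame, but the step you yourself call decisive is exactly where the proposal has a genuine gap. You invoke ``the equality cases of the Clarkson--McCarthy inequalities'' to force orthogonal supports, but that rigidity is a Schatten-norm fact, i.e.\ the case $k=mn$; for the truncated norm with $k<mn$, the equality $\|T+xS\|_{(p,k)}^p+\|T-xS\|_{(p,k)}^p=2\|T\|_{(p,k)}^p+2\|xS\|_{(p,k)}^p$ does \emph{not} by itself imply $T\perp S$, because the top-$k$ truncation is blind to what happens below the $k$-th singular value. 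The paper stresses (Section 4) that the corresponding lemmas of \cite{FHLS4} are inapplicable here and builds new machinery instead: the eigenvalue inequality
$\sum_{i=1}^{k}\lambda_i^{\gamma}(C+D)+\sum_{i=1}^{k}\lambda_i^{\gamma}(C-D)\geq 2\sum_{i=1}^{k}\lambda_i^{\gamma}(C)$
of Lemma \ref{CH3lemma3} (proved via McCarthy's inequality, Lemma \ref{CH3lemma1}, and yielding Corollary \ref{CH3c1}), together with the first-order perturbation dichotomy of Lemma \ref{lemma8} and Corollary \ref{c2}. From
$\sum_{i=1}^{k}\lambda_i^{p/2}(T^*T+x^2S^*S)\le\sum_{i=1}^{k}\lambda_i^{p/2}(T^*T)+\sum_{i=1}^{k}\lambda_i^{p/2}(x^2S^*S)$
one gets only a dichotomy: either $s_k(T)=0$, and then $T\perp S$, or $s_k(T)>0$ and $S$ is confined to the block complementary to the top $k+\ell$ singular directions of $T$; the second branch must then be excluded by the explicit norm identity $\|T+xS\|_{(p,k)}^p=1+x^p$ for small $x$ (Claim \ref{cl1}), and the analogous step in Claim \ref{cl2} additionally requires the rank bound $\mathrm{rank}(T)\le k$ supplied by Lemma \ref{CH3le2}. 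None of this dichotomy-plus-contradiction mechanism appears in your proposal, so your central assertion that rank-one product tensors map to rank-one matrices (established in the paper only for the family $\phi(xx^*\otimes yy^*)$, via orthogonality of the whole family) is unsupported as written.

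Two further points you flag as ``expected hard parts'' are indeed nontrivial, and the paper resolves them by specific devices rather than the case analysis in $k$ you anticipate. The gluing of the slice maps over all rank-one factors is done by the partial-trace argument of Lemma \ref{lemma9}: continuity of $\mathrm{Tr}_1\circ\phi$ on the connected set $\{xx^*\in M_m: x^*x=1\}$ forces all the local maps $\varphi_{i,X}$ to coincide, which is what lets the bilinear structure factor as $\varphi_1\otimes\varphi_2$. The isometric-embedding issue is handled by first proving, from the orthogonality of the full family $\{\phi(E_{ii}\otimes YE_{jj}Y^*)\}$ and the ensuing block-diagonal structure of the normalizing unitaries, that $\phi(E_{ii}\otimes B)=E_{ii}\otimes\psi_i(B)$; only then is $\psi_i$ a genuine $(p,\min\{k,n\})$-norm preserver of $M_n$ to which Li--Tsing applies. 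Finally, note that the paper treats $k=1$ separately by quoting \cite{FHLS4} (Lemma \ref{CH3le2} needs $k\ge 2$), a case your plan does not account for.
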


To prove the theorem, we need some notations and preliminary results.
{Denote by $\|A\|$ and $A^*$ the Frobenius norm and  the conjugate transpose of the matrix $A$, respectively.}  Two matrices $A,B\in M_n$ are said to be orthogonal, denoted by $A\perp B$, if $A^*B=AB^*=0$. Denote by $E_{ij}\in M_{m,n}$ the matrix whose $(i,j)$-th entry is equal to one and all the other entries are equal to zero.

The eigenvalues of an $n\times n$ Hermitian  matrix $A$ are always denoted in decreasing order by $\lambda_1(A)\geq \lambda_2(A)\geq\cdots\geq  \lambda_n(A)$. For   $A,B\in H_n$, we use the notation $A\geq B$  or $B\leq A$  to mean that $A-B$ is positive semidefinite. Let $\mathbb{R}$ be the set of all real numbers.
Rearrange the components of $x=(x_1,\ldots,x_n)\in \mathbb{R}^n$ in  decreasing order as $x_{[1]}\geq \cdots \geq x_{[n]}$. For  $x=(x_1,\ldots,x_n),$
 $y=(y_1,\ldots,y_n)\in \mathbb{R}^n$, if $$\sum_{i=1}^kx_{[i]}\leq
 \sum_{i=1}^ky_{[i]} \hbox{\quad for\quad}k=1,\ldots,n,$$
 then we say
 $x$ is  {\it weakly majorized} by $y$ and denote by $y\succ_w x$ or $x\prec_w y$.

  Notice that $x\mapsto x^{\gamma}$ $(x\geq 0)$ is a convex function for any real number $ \gamma\geq 1$.   One can easily conclude the following lemma.
\begin{lemma}\label{CH3lemma2}

Let $a,b\in \mathbb{R}$. If  $-a\leq b\leq a$, then for any real number $\gamma\geq 1$, 
\begin{equation}\label{le2.2}
(a+b)^{\gamma}+(a-b)^{\gamma}\geq 2a^{\gamma}.
\end{equation}
\end{lemma}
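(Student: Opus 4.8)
The plan is to read this off directly from the convexity of $f(x)=x^\gamma$ on $[0,\infty)$, as indicated in the sentence preceding the lemma. First I would record the two consequences of the hypothesis $-a\le b\le a$: adding $a$ to the left inequality gives $a+b\ge 0$, while the right inequality $b\le a$ gives $a-b\ge 0$; in particular $a\ge 0$ as well. Thus all three quantities $a+b$, $a-b$, and $a$ lie in the domain $[0,\infty)$ on which $f$ is defined and convex. This step matters because $x\mapsto x^\gamma$ need not even be real-valued (let alone convex) for negative $x$ when $\gamma$ is not an integer, so one must confirm at the outset that we remain inside the region of convexity.

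The key observation is that $a$ is exactly the midpoint of $a+b$ and $a-b$, since $\tfrac{(a+b)+(a-b)}{2}=a$. Applying the defining convexity inequality $f(\lambda x+(1-\lambda)y)\le \lambda f(x)+(1-\lambda)f(y)$ with $\lambda=\tfrac12$, $x=a+b$, and $y=a-b$ then yields
$$a^\gamma=f(a)=f\!\left(\frac{(a+b)+(a-b)}{2}\right)\le \frac{(a+b)^\gamma+(a-b)^\gamma}{2}.$$
Multiplying through by $2$ gives the desired bound $(a+b)^\gamma+(a-b)^\gamma\ge 2a^\gamma$, which is exactly \eqref{le2.2}.

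I do not anticipate any genuine obstacle here: the statement is precisely midpoint convexity of $x^\gamma$ rewritten in the variables $a$ and $b$, and the convexity itself is the fact already recorded just before the lemma (it also follows from $f''(x)=\gamma(\gamma-1)x^{\gamma-2}\ge 0$ for $x>0$ and $\gamma\ge 1$, with equality throughout when $\gamma=1$). The only point requiring mild care is the boundary behaviour when $a+b=0$ or $a-b=0$, but since $f$ is convex on the closed half-line $[0,\infty)$ these endpoints are covered as well, so no separate case analysis is needed.
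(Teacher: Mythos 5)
Your proof is correct and is essentially the paper's own argument: the paper gives no written proof, stating only that $x\mapsto x^{\gamma}$ $(x\geq 0)$ is convex for $\gamma\geq 1$ and that the lemma follows easily, which is precisely the midpoint-convexity computation you perform with $a=\frac{(a+b)+(a-b)}{2}$. Your preliminary check that $a+b$, $a-b$, and $a$ all lie in $[0,\infty)$ fills in the one detail the paper leaves tacit, and it is handled correctly.
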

The following lemmas are crucial in our proof.
\begin{lemma}\textnormal{\cite[Lemma 2.1]{M1967}}
\label{CH3lemma1}
Let $A\in M_n$ be a positive semidefinite matrix.  Then
$$x^*A^{\gamma}x\geq (x^*Ax)^{\gamma}\|x\|^{2(1-\gamma)}\quad  \hbox{for all\quad $x\in \mathbb{C}^n$ and $\gamma\geq 1 $}. $$
\end{lemma}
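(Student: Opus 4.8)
The plan is to reduce the inequality to a single application of Jensen's inequality after diagonalizing $A$. First I would dispose of the degenerate case: if $x=0$ both sides vanish, so assume $x\neq 0$. Since both sides are homogeneous of degree $2$ in $x$ (replacing $x$ by $tx$ with $t>0$ multiplies the left-hand side by $t^2$ and the right-hand side by $t^{2\gamma}\cdot t^{2(1-\gamma)}=t^2$), I may normalize so that $\|x\|=1$. With this normalization the claim reduces to $x^*A^{\gamma}x\ge (x^*Ax)^{\gamma}$.

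Next I would invoke the spectral theorem. Write $A=\sum_{i=1}^{n}\lambda_i u_iu_i^*$ with $\lambda_i\ge 0$ and an orthonormal basis of eigenvectors $u_1,\dots,u_n$. Setting $w_i=|u_i^*x|^2$, the identity $\sum_{i=1}^{n} w_i=\|x\|^2=1$ shows that $(w_1,\dots,w_n)$ is a probability vector, while $A^{\gamma}=\sum_{i=1}^{n}\lambda_i^{\gamma}u_iu_i^*$ yields the two expansions $x^*Ax=\sum_{i=1}^{n}\lambda_i w_i$ and $x^*A^{\gamma}x=\sum_{i=1}^{n}\lambda_i^{\gamma}w_i$.

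Finally I would use the convexity of $t\mapsto t^{\gamma}$ on $[0,\infty)$ for $\gamma\ge 1$, already noted above. Jensen's inequality applied to this convex function with weights $w_i$ gives $\sum_{i=1}^{n}\lambda_i^{\gamma}w_i\ge \left(\sum_{i=1}^{n}\lambda_i w_i\right)^{\gamma}$, which is precisely $x^*A^{\gamma}x\ge (x^*Ax)^{\gamma}$; undoing the normalization recovers the stated inequality. I do not anticipate a genuine obstacle here. The only points requiring care are the degenerate case $x=0$ and the bookkeeping of the homogeneity exponents in the normalization step; once the quantities are expressed in the eigenbasis, the estimate is a textbook consequence of Jensen's inequality.
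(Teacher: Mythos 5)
Your proof is correct; the only cosmetic point is that for $\gamma>1$ the right-hand side is formally $0\cdot\infty$ at $x=0$, so the inequality is really asserted for $x\neq 0$ (or with the convention you adopt), and your homogeneity reduction to $\|x\|=1$ handles this cleanly. The paper gives no proof of its own --- the lemma is quoted from McCarthy --- and McCarthy's original argument applies H\"older's inequality to the spectral resolution of $A$, which after normalization is the same computation as your Jensen step with weights $w_i=|u_i^*x|^2$, so your route is essentially the standard one.
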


\begin{lemma}\label{CH3lemma22}\textnormal{\cite[Lemma 3.7]{ZHAN2013}}
 Let $A\in M_n$ be a Hermitian matrix and $k \leq n$ be a positive integer. Then
\begin{equation}\notag
\sum_{i=1}^k\lambda_i(A)=\max\limits_{U^*U=I_k}\mathrm{tr}(U^*AU)\quad \hbox{and}\quad
\sum_{i=1}^k\lambda_{n-i+1}(A)=\min\limits_{U^*U=I_k}\mathrm{tr}(U^*AU),
\end{equation}
where  $I_k$ is the identity matrix of order $k$
and $U\in M_{n,k}$.
\end{lemma}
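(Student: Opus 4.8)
The statement is Ky Fan's maximum (and minimum) principle, and the plan is to reduce it to a finite-dimensional rearrangement inequality after diagonalizing. First I would spectrally decompose $A=W\Lambda W^*$ with $W$ unitary and $\Lambda=\mathrm{diag}(\lambda_1(A),\ldots,\lambda_n(A))$. For any $U\in M_{n,k}$ with $U^*U=I_k$, set $V=W^*U$, which again satisfies $V^*V=U^*WW^*U=I_k$. Using the cyclic invariance of the trace,
$$\mathrm{tr}(U^*AU)=\mathrm{tr}(V^*\Lambda V)=\mathrm{tr}(\Lambda VV^*)=\sum_{j=1}^n\lambda_j(A)\,c_j,\qquad c_j:=(VV^*)_{jj}.$$
The matrix $P=VV^*$ is the orthogonal projection onto the $k$-dimensional column space of $V$, since $P^*=P$ and $P^2=V(V^*V)V^*=VV^*=P$. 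Hence each diagonal entry obeys $c_j=e_j^*Pe_j=\|Pe_j\|^2\in[0,1]$ (writing $e_j$ for the $j$-th standard basis vector), while $\sum_{j=1}^n c_j=\mathrm{tr}(P)=\mathrm{tr}(V^*V)=k$. Thus $\mathrm{tr}(U^*AU)$ equals the weighted sum $\sum_j\lambda_j(A)c_j$ with the weights constrained to the polytope $\{\,c:0\le c_j\le1,\ \sum_j c_j=k\,\}$.

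Second, I would prove the scalar inequality $\sum_{j=1}^n\lambda_j(A)c_j\le\sum_{j=1}^k\lambda_j(A)$ for every admissible weight vector. This is where the ordering $\lambda_1(A)\ge\cdots\ge\lambda_n(A)$ enters. Writing
$$\sum_{j=1}^k\lambda_j(A)-\sum_{j=1}^n\lambda_j(A)c_j=\sum_{j=1}^k\lambda_j(A)(1-c_j)-\sum_{j=k+1}^n\lambda_j(A)c_j,$$
I would bound the first sum below by $\lambda_k(A)\sum_{j=1}^k(1-c_j)$ (each $1-c_j\ge0$ and $\lambda_j(A)\ge\lambda_k(A)$ for $j\le k$) and the second sum above by $\lambda_k(A)\sum_{j=k+1}^n c_j$ (each $c_j\ge0$ and $\lambda_j(A)\le\lambda_k(A)$ for $j>k$). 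Since $\sum_{j=1}^k(1-c_j)=k-\sum_{j=1}^k c_j=\sum_{j=k+1}^n c_j$, the two bounds cancel and the displayed difference is nonnegative. Hence $\mathrm{tr}(U^*AU)\le\sum_{i=1}^k\lambda_i(A)$ for all admissible $U$. (Equivalently, one may observe that any such $c$ satisfies $c\prec_w(1,\ldots,1,0,\ldots,0)$ and invoke the weak-majorization machinery, but the direct cancellation above is self-contained.)

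Finally, I would verify the bound is attained, so that the supremum is a genuine maximum: taking $U$ to be the first $k$ columns of $W$, an orthonormal family of eigenvectors for $\lambda_1(A),\ldots,\lambda_k(A)$, gives $U^*AU=\mathrm{diag}(\lambda_1(A),\ldots,\lambda_k(A))$ and $\mathrm{tr}(U^*AU)=\sum_{i=1}^k\lambda_i(A)$. This establishes the maximum identity. The minimum identity then follows at no extra cost by applying the maximum identity to $-A$: the $k$ largest eigenvalues of $-A$ are $-\lambda_n(A),\ldots,-\lambda_{n-k+1}(A)$, and $\max_{U^*U=I_k}\mathrm{tr}(U^*(-A)U)=-\min_{U^*U=I_k}\mathrm{tr}(U^*AU)$, which rearranges to $\min_{U^*U=I_k}\mathrm{tr}(U^*AU)=\sum_{i=1}^k\lambda_{n-i+1}(A)$. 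The only substantive point is the reduction in the first paragraph, namely recognizing that $\mathrm{tr}(U^*AU)$ is exactly $\sum_j\lambda_j(A)c_j$ with $c$ ranging over the stated polytope and, at its vertices, exhausting it; once this is in place both inequalities are elementary rearrangement estimates.
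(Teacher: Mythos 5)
Your proposal is correct in every step, but note that the paper itself offers no proof of this lemma to compare against: it is quoted verbatim from \cite[Lemma~3.7]{ZHAN2013} (Ky Fan's maximum principle) as a known black box. Judged on its own merits, your argument is complete and self-contained. The reduction is sound: with $V=W^*U$ you correctly verify $V^*V=I_k$, that $P=VV^*$ is an orthogonal projection, that $c_j=\|Pe_j\|^2\in[0,1]$, and that $\sum_j c_j=\mathrm{tr}(P)=k$, so $\mathrm{tr}(U^*AU)=\sum_j \lambda_j(A)c_j$ with $c$ in the stated polytope. The cancellation estimate is also right: the identity $\sum_{j=1}^k(1-c_j)=\sum_{j=k+1}^n c_j$ makes the two $\lambda_k(A)$-bounds cancel exactly, and you remembered both the attainment step (first $k$ columns of $W$), without which one only gets a supremum, and the deduction of the minimum identity by passing to $-A$. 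For comparison with other standard routes: the most common textbook alternative bypasses the polytope and instead notes that $U^*AU$ is a compression of $A$, so Cauchy interlacing gives $\lambda_i(U^*AU)\le\lambda_i(A)$ for $i=1,\ldots,k$ and summing yields the bound in one line; your weight-vector argument is slightly longer but more elementary (no interlacing needed) and, as you observe parenthetically, it exhibits the underlying weak-majorization structure $c\prec_w(1,\ldots,1,0,\ldots,0)$, which is the same circle of ideas the paper exploits later (e.g.\ in Lemma~\ref{lemma8} via Theorem~3.26 of \cite{ZHAN2013}). No gaps.
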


\begin{lemma}\textnormal{\cite[Lemma 1]{LS2003}}\label{lemma4}
Let $A, B\in  M_n$. Then  $A\perp B$ if and only if there exist $\hat{A}\in M_m $, $\hat{B}\in M_{n-m}$ and  unitary matrices $U,V\in M_n$  such that
$$UAV=\hat{A}\oplus 0\quad \hbox{and}\quad UBV=0\oplus \hat{B}.$$
\end{lemma}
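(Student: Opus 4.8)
The plan is to prove the two implications separately, with the ``if'' direction being a routine block computation and the ``only if'' direction carried by a simultaneous singular value decomposition of $A$ and $B$. For the easy direction, suppose $UAV=\hat A\oplus 0$ and $UBV=0\oplus\hat B$. Writing $A=U^*(\hat A\oplus 0)V^*$ and $B=U^*(0\oplus\hat B)V^*$, I would compute $A^*B=V(\hat A^*\oplus 0)(0\oplus\hat B)V^*=0$ and $AB^*=U^*(\hat A\oplus 0)(0\oplus\hat B^*)U=0$, since in each case the two block-diagonal factors have disjoint support; hence $A\perp B$. This uses nothing beyond the definition of $\perp$.

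For the converse, I would first translate the orthogonality relation into geometric conditions on the fundamental subspaces. The identity $A^*B=0$ is equivalent to $\operatorname{range}(B)\subseteq\ker(A^*)=\operatorname{range}(A)^\perp$, so it says exactly that the column spaces of $A$ and $B$ are orthogonal; likewise $AB^*=0$ is equivalent to $\operatorname{range}(B^*)\subseteq\ker(A)=\operatorname{range}(A^*)^\perp$, i.e. the row spaces are orthogonal. In particular $\operatorname{rank}(A)+\operatorname{rank}(B)\le n$, which is precisely what makes the target block sizes feasible.

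Next I would write the reduced singular value decompositions $A=\sum_{i=1}^a s_i x_i y_i^*$ and $B=\sum_{j=1}^b t_j w_j z_j^*$, where $a=\operatorname{rank}(A)$, $b=\operatorname{rank}(B)$, the $x_i,w_j$ are left singular vectors and the $y_i,z_j$ right singular vectors. The subspace conditions above say precisely that $\{x_i\}\perp\{w_j\}$ and $\{y_i\}\perp\{z_j\}$, so $\{x_1,\dots,x_a,w_1,\dots,w_b\}$ and $\{y_1,\dots,y_a,z_1,\dots,z_b\}$ are each orthonormal systems of $a+b\le n$ vectors. Fixing any $m$ with $a\le m\le n-b$, I would build unitaries $\mathcal U=[p_1|\cdots|p_n]$ and $\mathcal V=[q_1|\cdots|q_n]$ by placing $x_i$ (resp. $y_i$) in columns $1,\dots,a$, placing $w_j$ (resp. $z_j$) in columns $m+1,\dots,m+b$, and completing arbitrarily to orthonormal bases. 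Setting $U=\mathcal U^*$ and $V=\mathcal V$, an entrywise check using $Aq_j=s_j x_j$ for $j\le a$ (and $Aq_j=0$ otherwise) together with the orthonormality of the $p_i$ gives $UAV=\operatorname{diag}(s_1,\dots,s_a,0,\dots,0)$, supported in the leading $m\times m$ block, i.e. of the form $\hat A\oplus 0$; the analogous computation with $Bq_{m+k}=t_k w_k$ yields $UBV=0\oplus\hat B$.

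The main obstacle is organizational rather than deep: one must arrange the extended orthonormal bases so that, simultaneously on the left and on the right, the singular vectors of $A$ occupy the first block of coordinates and those of $B$ occupy the second, and then verify that the ``padding'' columns contribute nothing to either product. The feasibility of this bookkeeping rests entirely on the rank inequality $a+b\le n$ derived above, together with the fact that the vanishing of $A^*B$ and $AB^*$ forces the two singular systems to be mutually orthogonal on both sides. The degenerate cases $A=0$ or $B=0$ are handled trivially by taking one of the two blocks to be empty.
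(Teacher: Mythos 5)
Your proof is correct, but note that the paper offers no proof of this lemma at all: it is imported verbatim from \cite[Lemma 1]{LS2003}, so there is no in-paper argument to compare against. Your route --- observing that $A^*B=0$ and $AB^*=0$ say exactly that the column spaces and the row spaces of $A$ and $B$ are mutually orthogonal, deducing $\mathrm{rank}(A)+\mathrm{rank}(B)\le n$, and then assembling the two singular systems into a single pair of unitaries by completing $\{x_1,\dots,x_a,w_1,\dots,w_b\}$ and $\{y_1,\dots,y_a,z_1,\dots,z_b\}$ to orthonormal bases --- is the standard proof of this fact and is essentially the argument in the cited source. All the steps check out: the padding columns of $\mathcal V$ are orthogonal to every $y_i$ and every $z_k$, hence lie in $\ker A\cap\ker B$, which is precisely why they contribute nothing to $UAV$ or $UBV$; and the feasibility of choosing $m$ with $a\le m\le n-b$ is exactly the rank inequality. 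The degenerate cases are handled as you say, with empty blocks.
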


\begin{lemma}\label{adle2}
Let $A, B,C\in M_{n}$.   If $(A+B)\perp C$ and  $A\perp B$,  then
$$A\perp C\quad \hbox{and} \quad B\perp C.$$
\end{lemma}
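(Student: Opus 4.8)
The plan is to unpack the definition of orthogonality and then peel the two given identities apart summand by summand. Writing out $(A+B)\perp C$ gives $(A+B)^*C=0$ and $(A+B)C^*=0$, that is,
$$A^*C+B^*C=0 \quad\text{and}\quad AC^*+BC^*=0,$$
while $A\perp B$ supplies $A^*B=0$ and $AB^*=0$, and hence also (taking adjoints) $B^*A=0$ and $BA^*=0$. The goal is to show $A^*C=B^*C=0$ together with $AC^*=BC^*=0$, which is exactly $A\perp C$ and $B\perp C$.

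For the first identity I would left-multiply $A^*C+B^*C=0$ by $A$ to obtain $AA^*C+AB^*C=0$; since $AB^*=0$ the second term vanishes and $AA^*C=0$. The key elementary fact is $\ker(AA^*)=\ker(A^*)$, which follows from $AA^*x=0\Rightarrow \|A^*x\|^2=x^*AA^*x=0$. Applying this columnwise to $C$ forces $A^*C=0$, and then $B^*C=0$ follows from the identity itself.

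For the second identity I would first pass to adjoints, rewriting it as $CA^*+CB^*=0$, then right-multiply by $A$ and use $B^*A=0$ to get $CA^*A=0$; right-multiplying by $C^*$ then yields $(CA^*)(CA^*)^*=CA^*AC^*=0$, so $CA^*=0$, i.e. $AC^*=0$, and $BC^*=0$ follows. The four vanishing products together give $A\perp C$ and $B\perp C$, completing the proof.

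The only real point to get right — the place where there is anything to do — is that orthogonality is a \emph{two-sided} condition ($X^*Y$ and $XY^*$), so the two halves must be treated by mirror-image manipulations: a left multiplication for the $(\cdot)^*C$ part, and a right multiplication after taking adjoints for the $(\cdot)C^*$ part. Neither step alone suffices. An alternative route, perhaps closer to the spirit of the preceding results, is to invoke Lemma~\ref{lemma4} to bring $A=\hat A\oplus 0$ and $B=0\oplus\hat B$ into block-diagonal form via a transformation $X\mapsto UXV$ (which preserves $\perp$), partition $C$ conformally, and read the four block equations directly from $(A+B)\perp C$; I would keep the short algebraic argument above as the cleaner option.
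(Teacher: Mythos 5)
Your proof is correct, and it takes a genuinely different route from the paper. The paper's argument is structural: it invokes Lemma~\ref{lemma4} to find unitaries $U,V$ with $UAV=\hat A\oplus 0$ and $UBV=0\oplus\hat B$, partitions $UCV$ conformally, and reads off from the block form of $(U(A+B)V)^*(UCV)=0$ and $(U(A+B)V)(UCV)^*=0$ that the blocks $\hat A^*C_{11},\hat A^*C_{12},\hat A C_{11}^*,\hat A C_{21}^*$ (and the corresponding $\hat B$ blocks) all vanish, which reassembles into $A\perp C$ and $B\perp C$. Your argument instead stays at the level of the defining identities: you kill the cross terms using $AB^*=0$ and $B^*A=0$, then extract $A^*C=0$ from $AA^*C=0$ via $\ker(AA^*)=\ker(A^*)$, and extract $CA^*=0$ from $CA^*AC^*=(CA^*)(CA^*)^*=0$ via the standard fact that $XX^*=0$ forces $X=0$. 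Both of these positive-semidefiniteness steps are sound, and you are right that the two halves of orthogonality require the mirror-image manipulations you describe. What each approach buys: yours is self-contained and shorter, needing no simultaneous singular-value-type decomposition, only elementary inner-product identities; the paper's route leans on Lemma~\ref{lemma4}, which it has already stated and whose block picture recurs implicitly elsewhere (e.g.\ in the singular-value bookkeeping of Lemma~\ref{CH3le2} and Claim~\ref{cl1}), so the decomposition proof keeps the whole development in one uniform geometric language. Your closing remark correctly identifies the block-diagonal route as the alternative — it is in fact exactly the paper's proof.
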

\begin{proof}
Since $A\perp B$, we can apply Lemma \ref{lemma4} to conclude that there exist $\hat{A}\in M_m $, $\hat{B}\in M_{n-m}$ and  unitary matrices $U,V\in M_n$  such that
$$UAV=\hat{A}\oplus 0\quad \hbox{and}\quad UBV=0\oplus \hat{B}.$$
Let $UCV$ be partitioned as
$$UCV=\begin{bmatrix}
C_{11}&C_{12}\\
C_{21}&C_{22}
\end{bmatrix}
$$ with $C_{11}\in M_m$ and $C_{22}\in M_{n-m}.$ It follows from $(A+B)\perp C$ that
\begin{equation}\notag
(U(A+B)V)^{*}(UCV)=0 \quad \hbox{and}\quad (U(A+B)V)(UCV)^*=0,
\end{equation}
that is,
$$\begin{bmatrix}
\hat{A}^*C_{11}&\hat{A}^*C_{12}\\
\hat{B}^{*}C_{21}&\hat{B}^{*}C_{22}
\end{bmatrix}=0\quad \hbox{and}\quad
\begin{bmatrix}
\hat{A}C_{11}^*&\hat{A}C_{21}^*\\
\hat{B}C_{12}^*&\hat{B}C_{22}^*
\end{bmatrix}=0.
$$
Then we have 
$$V^*A^*CV=(UAV)^*(UCV)=
\begin{bmatrix}
\hat{A}^*C_{11}&\hat{A}^*C_{12}\\
0&0
\end{bmatrix}=0
$$
and
$$UAC^*U^*=(UAV)(UCV)^*=\begin{bmatrix}
\hat{A}C_{11}^*&\hat{A}C_{21}^*\\
0&0
\end{bmatrix}=0
.$$
Thus,  $A^{*}C=0$ and $AC^{*}=0$, i.e., $A\perp C$. Similarly, we can also conclude that $B\perp C$.
\end{proof}

\begin{lemma}\label{CH3lemma3}
Let $C,D\in M_n$ be two Hermitian matrices such that $-C\leq D\leq C$ and $ k\leq n$  be a positive integer. Then for any real number $ \gamma\geq 1$,
\begin{equation}\label{le2.7}
\sum_{i=1}^{k}\lambda_i^{\gamma}(C+D)+\sum_{i=1}^{k}\lambda_i^{\gamma}(C-D)\geq 2\sum_{i=1}^k\lambda_i^{\gamma}(C).
\end{equation}
\end{lemma}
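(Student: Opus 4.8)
The plan is to reduce the matrix inequality to the scalar inequality of Lemma \ref{CH3lemma2} by testing against a carefully chosen orthonormal frame, combining the variational characterisation of Lemma \ref{CH3lemma22} with the power-mean bound of Lemma \ref{CH3lemma1}. First I would record the elementary consequences of the hypothesis $-C\le D\le C$: adding $C$ throughout gives $0\le C+D$ and $0\le C-D$, while $-C\le C$ gives $C\ge 0$; hence all three matrices $C$, $C+D$, $C-D$ are positive semidefinite. Since $t\mapsto t^{\gamma}$ is increasing on $[0,\infty)$, for each $X\in\{C,\,C+D,\,C-D\}$ the matrix power $X^{\gamma}$ has eigenvalues $\lambda_i^{\gamma}(X)$ in decreasing order, so $\sum_{i=1}^{k}\lambda_i^{\gamma}(X)=\sum_{i=1}^{k}\lambda_i(X^{\gamma})$.

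Next I would fix the test frame. Let $u_1,\dots,u_k$ be orthonormal eigenvectors of $C$ associated with its $k$ largest eigenvalues, and set $U=[\,u_1\ \cdots\ u_k\,]$, so $U^*U=I_k$. Because each $u_j$ is an eigenvector of $C$, and hence of $C^{\gamma}$, we have the crucial equality $u_j^*C^{\gamma}u_j=\lambda_j^{\gamma}(C)=(u_j^*Cu_j)^{\gamma}$; that is, Lemma \ref{CH3lemma1} is \emph{saturated} on this frame. Applying the max-formula of Lemma \ref{CH3lemma22} to the positive semidefinite matrices $(C\pm D)^{\gamma}$ gives the lower bounds
\[
\sum_{i=1}^{k}\lambda_i^{\gamma}(C\pm D)=\sum_{i=1}^{k}\lambda_i\bigl((C\pm D)^{\gamma}\bigr)\ge \mathrm{tr}\bigl(U^*(C\pm D)^{\gamma}U\bigr)=\sum_{j=1}^{k}u_j^*(C\pm D)^{\gamma}u_j,
\]
whereas for $C$ the frame attains the maximum, so $\sum_{i=1}^{k}\lambda_i^{\gamma}(C)=\sum_{j=1}^{k}u_j^*C^{\gamma}u_j$.

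It then suffices to prove the column-wise inequality $u_j^*(C+D)^{\gamma}u_j+u_j^*(C-D)^{\gamma}u_j\ge 2\,u_j^*C^{\gamma}u_j$ for each $j$. Writing $x=u_j$ with $\|x\|=1$, Lemma \ref{CH3lemma1} gives $x^*(C\pm D)^{\gamma}x\ge\bigl(x^*(C\pm D)x\bigr)^{\gamma}$. Setting $a=x^*Cx$ and $b=x^*Dx$, the semidefiniteness of $C\pm D$ forces $a+b\ge 0$ and $a-b\ge 0$, i.e. $-a\le b\le a$ with $a\ge 0$, so Lemma \ref{CH3lemma2} yields $(a+b)^{\gamma}+(a-b)^{\gamma}\ge 2a^{\gamma}$. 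Chaining these bounds and using the saturation $a^{\gamma}=(x^*Cx)^{\gamma}=x^*C^{\gamma}x$ established above delivers the column-wise inequality; summing over $j$ and combining with the two displayed inequalities finishes the proof. The main obstacle to anticipate is exactly the direction of Lemma \ref{CH3lemma1}: an arbitrary orthonormal frame would only bound the left side below by $2(x^*Cx)^{\gamma}$, which is in general strictly smaller than the target $2\,x^*C^{\gamma}x$. Choosing the eigenvectors of $C$ is precisely what collapses this gap, since equality in Lemma \ref{CH3lemma1} holds there while the max-characterisation still supplies the needed one-sided bounds for $C\pm D$.
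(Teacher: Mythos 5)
Your proposal is correct and follows essentially the same route as the paper's own proof: both fix the orthonormal frame of the top-$k$ eigenvectors of $C$, bound $\sum_{i=1}^{k}\lambda_i^{\gamma}(C\pm D)$ below via the trace-maximum characterisation (Lemma \ref{CH3lemma22}) applied to $(C\pm D)^{\gamma}$, invoke McCarthy's inequality (Lemma \ref{CH3lemma1}) on each column, and finish with the scalar convexity bound of Lemma \ref{CH3lemma2}. Your explicit remark that Lemma \ref{CH3lemma1} is saturated on eigenvectors of $C$, so that $(u_j^*Cu_j)^{\gamma}=\lambda_j^{\gamma}(C)$, is implicit in the paper's final equality and is a nice clarification, but it does not change the argument.
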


\begin{proof}
Let $U\in M_n$ be a unitary matrix  such that  $$U^*CU=\mathrm{diag}(\lambda_1(C),\lambda_2(C),\ldots,\lambda_n({C})).$$ Denote by $u_i$ the $i$-th  column of $U$ for $i=1,\ldots,n$.  Let $\hat{U}=[u_1,u_2\ldots,u_k].$ Then applying Lemma \ref{CH3lemma22}, we have
\begin{equation}\notag
\sum_{i=1}^k\lambda_{i}^{\gamma}(C+D)=\sum_{i=1}^k\lambda_{i}\big((C+D)^{\gamma}\big)\geq \mathrm{tr}(\hat{U}^*(C+D)^{\gamma}\hat{U})
\end{equation}  and
$$\quad \sum_{i=1}^k\lambda_{i}^{\gamma}(C-D)=\sum_{i=1}^k\lambda_{i}\big((C-D)^{\gamma}\big)\geq \mathrm{tr}(\hat{U}^*(C-D)^{\gamma}\hat{U}).$$
Since $-C\leq D\leq C$,  we have
 $$-x^*Cx\leq x^*Dx\leq x^*Cx \quad\hbox{ for all\quad }  x\in \mathbb{C}^{n}.$$
By Lemma \ref{CH3lemma1},  we have  
\begin{equation}\notag
u_i^{*}(C+D)^{\gamma}u_{i}\geq (u_i^*(C+D)u_i)^{\gamma} \quad \hbox{and} \quad u_i^{*}(C-D)^{\gamma}u_{i}\geq (u_i^*(C-D)u_i)^{\gamma}
\end{equation}
for  $i=1,\ldots,n$.   Applying Lemma \ref{CH3lemma2} with $a=u_i^{*}Cu_i$ and $b=u_i^{*}Du_i$, we get
\begin{equation}\notag
(u_i^{*}(C+D)u_i)^{\gamma}+(u_i^{*}(C-D)u_i)^{\gamma}\geq 2(u_i^{*}Cu_i)^{\gamma}\quad \hbox{for\quad $i=1,\ldots, n.$}
\end{equation}
It follows from the above inequalities  that
\begin{equation}\notag
\begin{split}\sum_{i=1}^k\lambda_{i}^{\gamma}(C+D)+\sum_{i=1}^k\lambda_{i}^{\gamma}(C-D)\geq& \mathrm{tr}(\hat{U}^*(C+D)^{\gamma}\hat{U})+\mathrm{tr}(\hat{U}^*(C-D)^{\gamma}\hat{U})\\
=&\sum_{i=1}^{k}u_i^{*}(C+D)^{\gamma}u_i+\sum_{i=1}^{k}u_i^{*}(C-D)^{\gamma}u_i
\\
\geq&\sum_{i=1}^{k}(u_i^{*}(C+D)u_i)^{\gamma}+\sum_{i=1}^{k}(u_i^{*}(C-D)u_i)^{\gamma} \\
\geq& 2 \sum_{i=1}^{k}(u_i^{*}Cu_i)^{\gamma}=2\sum_{i=1}^k\lambda_i^{\gamma}(C).
\\
\end{split}
\end{equation}\end{proof}

{\begin{Remark}\label{remark}
The inequality (\ref{le2.7}) can be regarded as a generalization of the inequality (\ref{le2.2}) in Lemma \ref{CH3lemma2}.  It is worth noting that if $-a\leq b\leq a$, then  
$$(a+b)^{\gamma}+(a-b)^{\gamma}\leq 2a^{\gamma}\quad \hbox{for all}\quad  0<\gamma<1.$$
In our attempt to generalize this inequality,  we aimed to obtain the following analogous inequality to (\ref{le2.7}):
$$\sum_{i=1}^{k}\lambda_i^{\gamma}(C+D)+\sum_{i=1}^{k}\lambda_i^{\gamma}(C-D)\leq  2\sum_{i=1}^k\lambda_i^{\gamma}(C)\quad \hbox{for all }\quad 0<\gamma<1,$$ 
where $1\leq k\leq n$ and  $C,D\in M_n$ are Hermitian matrices such that $C+D$ and $C-D$ are both positive semidefinite. However, it has been demonstrated that this inequality does not hold in general. 
A counterexample can be constructed by considering matrices $C$ and $D$ such that $C+D=\mathrm{diag}(1,1,3,3)$ and $C-D=\mathrm{diag}(3,3,1,1)$. In this case,  we observe that
$\sum\limits_{i=1}^{2}\lambda_i^{\gamma}(C+D)+\sum\limits_{i=1}^{2}\lambda_i^{\gamma}(C-D)= 4\cdot 3^{\gamma}>2\sum\limits_{i=1}^2\lambda_i^{\gamma}(C)=4\cdot 2^{\gamma}.$
\end{Remark}
}

\begin{corollary}\label{CH3c1}
Let  $p>2$ be a real number and $ k\leq n$ be a positive  integer.   Then
\begin{equation}\label{CH3eqqq1}
\|A+B\|_{(p,k)}^p+\|A-B\|_{(p,k)}^p\geq 2\sum_{i=1}^k\lambda_i^{\frac{p}{2}}(A^*A+B^*B)
\end{equation}
for all $A,B\in M_n.$
\end{corollary}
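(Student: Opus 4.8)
The plan is to reduce the inequality to Lemma \ref{CH3lemma3} by a single substitution that converts singular values into eigenvalues of a Hermitian matrix. First I would recall that for any $X\in M_n$ the singular values satisfy $s_i(X)=\sqrt{\lambda_i(X^*X)}$, so that $\|X\|_{(p,k)}^p=\sum_{i=1}^k s_i^p(X)=\sum_{i=1}^k\lambda_i^{p/2}(X^*X)$. Applying this to $X=A+B$ and $X=A-B$ rewrites the entire left-hand side of (\ref{CH3eqqq1}) in terms of eigenvalues of the positive semidefinite matrices $(A+B)^*(A+B)$ and $(A-B)^*(A-B)$.

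Next I would introduce the Hermitian matrices $C=A^*A+B^*B$ and $D=A^*B+B^*A$, both of which are readily checked to be Hermitian. Expanding the products gives $(A+B)^*(A+B)=C+D$ and $(A-B)^*(A-B)=C-D$, so the target inequality becomes precisely the conclusion $\sum_{i=1}^k\lambda_i^{\gamma}(C+D)+\sum_{i=1}^k\lambda_i^{\gamma}(C-D)\geq 2\sum_{i=1}^k\lambda_i^{\gamma}(C)$ of Lemma \ref{CH3lemma3}, taken with $\gamma=p/2$; this $\gamma$ satisfies $\gamma\geq 1$ exactly because $p>2$.

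The only hypothesis of Lemma \ref{CH3lemma3} left to verify is $-C\leq D\leq C$, and this comes for free from the construction: $C+D=(A+B)^*(A+B)\geq 0$ yields $D\geq -C$, while $C-D=(A-B)^*(A-B)\geq 0$ yields $D\leq C$. Hence the substitution is legitimate and Lemma \ref{CH3lemma3} delivers the claim after translating the eigenvalue expressions back into the $(p,k)$-norm notation.

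There is essentially no genuine obstacle here; the statement is a corollary precisely because the choice $C=A^*A+B^*B$, $D=A^*B+B^*A$ makes the positivity hypothesis $-C\leq D\leq C$ automatic and reduces everything to the already-established Lemma \ref{CH3lemma3}. The only points requiring a moment's care are the identity $\lambda_i^{p/2}(X^*X)=s_i^p(X)$ and the matching of the two decreasing orderings term by term, both of which are routine once the substitution is in place.
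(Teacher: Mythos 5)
Your proposal is correct and coincides with the paper's own proof: both set $C=A^*A+B^*B$ and $D=A^*B+B^*A$, observe that $C\pm D=(A\pm B)^*(A\pm B)\geq 0$ gives $-C\leq D\leq C$, rewrite $\|A\pm B\|_{(p,k)}^p=\sum_{i=1}^k\lambda_i^{p/2}\big((A\pm B)^*(A\pm B)\big)$, and invoke Lemma \ref{CH3lemma3} with $\gamma=p/2$. There is nothing to add.
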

\begin{proof}
Notice that $$\|A+B\|_{(p,k)}^p=\sum_{i=1}^ks_i^p(A+B)=\sum_{i=1}^k\lambda_i^{\frac{p}{2}}\big((A^*A+B^*B)+(A^*B+B^*A)\big)$$ and $$\|A-B\|_{(p,k)}^p=\sum_{i=1}^ks_i^p(A-B)=\sum_{i=1}^k\lambda_i^{\frac{p}{2}}\big((A^*A+B^*B)-(A^*B+B^*A)\big).$$
Let  $C=A^*A+B^*B$ and $D= A^*B+B^*A$. Then $C+D=(A+B)^*(A+B)$ and $C-D=(A-B)^*(A-B)$ are both positive semidefinite, that is, $-C\leq D\leq C$.
Applying Lemma \ref{CH3lemma3},
we get (\ref{CH3eqqq1}).\end{proof}

\begin{lemma}\label{CH3le2}
Let $A,B\in M_n$ be nonzero matrices and $ 2\leq k\leq n$ be an integer. Given a real number $p\geq 1,$ if
$$\|A+B\|_{(p,k)}^p=\|A\|_{(p,k)}^p+\|B\|_{(p,k)}^p\quad \hbox{and}\quad A\perp B,$$ then $\mathrm{rank}(A+B)\leq k.$
\end{lemma}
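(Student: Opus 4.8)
The plan is to use the orthogonality hypothesis to reduce the problem to block-diagonal form, where the $(p,k)$-norm identity becomes a transparent scalar statement about two lists of singular values.

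First I would invoke Lemma \ref{lemma4}. Since $A\perp B$, there exist unitary matrices $U,V\in M_n$ and square matrices $\hat A,\hat B$ of complementary sizes summing to $n$ with $UAV=\hat A\oplus 0$ and $UBV=0\oplus\hat B$. The $(p,k)$-norm depends only on singular values, hence is invariant under $X\mapsto UXV$; replacing $A,B$ by $UAV,UBV$ leaves both sides of the hypothesis unchanged (note $U(A+B)V=UAV+UBV$). So I may assume outright that $A=\hat A\oplus 0$ and $B=0\oplus\hat B$, whence $A+B=\hat A\oplus\hat B$. Writing $a_1\ge a_2\ge\cdots$ for the singular values of $\hat A$ padded with zeros, and $b_1\ge b_2\ge\cdots$ likewise for $\hat B$, we have $s_i(A)=a_i$ and $s_i(B)=b_i$, while the singular values of $A+B$ are precisely the decreasing merge of the two lists. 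Thus the hypothesis becomes
$$\sum_{i=1}^k c_i^p=\sum_{i=1}^k a_i^p+\sum_{i=1}^k b_i^p,$$
where $c_1\ge\cdots\ge c_k$ are the $k$ largest entries of the merged list.

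The key elementary observation is that the top $k$ entries of a merge of two decreasing lists are obtained by taking a prefix of each: say the $r$ largest $a$'s and the $s$ largest $b$'s with $r+s=k$, since within each list a larger entry is never skipped in favour of a smaller one. Hence $\sum_{i=1}^k c_i^p=\sum_{i=1}^r a_i^p+\sum_{j=1}^s b_j^p$, and substituting this into the displayed identity and cancelling the common prefix sums leaves
$$0=\sum_{i=r+1}^k a_i^p+\sum_{j=s+1}^k b_j^p.$$
Every summand is nonnegative, so $a_i=0$ for $r<i\le k$ and $b_j=0$ for $s<j\le k$. The hypotheses $A\ne 0$ and $B\ne 0$ rule out the extremes $s=0$ and $r=0$ (if, e.g., $s=0$ then the above forces $b_1=\cdots=b_k=0$, hence $\hat B=0$), so $1\le r,s\le k-1$. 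In particular $r<k$ and $s<k$, giving $a_{r+1}=0$ and $b_{s+1}=0$; since the lists are decreasing this yields $\mathrm{rank}(\hat A)\le r$ and $\mathrm{rank}(\hat B)\le s$. Therefore $\mathrm{rank}(A+B)=\mathrm{rank}(\hat A)+\mathrm{rank}(\hat B)\le r+s=k$, as desired.

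I expect the only genuine content to be the two conceptual steps: the reduction to $\hat A\oplus\hat B$ via Lemma \ref{lemma4}, and the prefix decomposition of the top-$k$ entries of the merged singular-value list. The rest is bookkeeping, and the main point to handle carefully is the justification of that prefix decomposition together with the boundary cases $r=0$ or $s=0$, which is exactly where the nonvanishing of $A$ and $B$ is used; note that the argument needs only $p\ge 1$ and nonnegativity, not the sharper $p>2$ hypothesis appearing elsewhere.
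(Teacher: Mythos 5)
Your proof is correct and takes essentially the same route as the paper's: the paper likewise identifies the top $k$ singular values of $A+B$ with prefixes $s_1(A),\dots,s_\ell(A),s_1(B),\dots,s_{k-\ell}(B)$, uses the norm equality to force the remaining tail sums to vanish and the nonvanishing of $A$ and $B$ to get $1\le \ell\le k-1$, and concludes via $\mathrm{rank}(A+B)=\mathrm{rank}(A)+\mathrm{rank}(B)$. The only cosmetic difference is that you make the block-diagonal reduction through Lemma \ref{lemma4} explicit, where the paper invokes the merged singular-value list and rank additivity implicitly from $A\perp B$.
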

\begin{proof}
With the assumption that $A\perp B$, we can
assume that  the largest $k$ singular values of $A+B$ are $s_1(A),\ldots,s_{\ell}(A),s_1(B),\ldots,s_{k-\ell}(B)$ for some $0\leq \ell \leq k$.
Then\begin{equation}\label{CH332}
\|A+B\|_{(p,k)}^p=\sum_{i=1}^{\ell}s_i^p(A)+\sum_{i=1}^{k-\ell}s_i^p(B)\leq \sum_{i=1}^{k}s_i^p(A)+\sum_{i=1}^{k}s_i^p(B).
\end{equation}
On the other hand, we have
$$\|A+B\|_{(p,k)}^p=\|A\|_{(p,k)}^p+\|B\|_{(p,k)}^p=\displaystyle\sum_{i=1}^{k}s_i^p(A)+\displaystyle\sum_{i=1}^{k}s_i^p(B). $$ Thus, the equality in (\ref{CH332})  holds, which implies
\begin{equation}\label{addd}
\sum_{i=1}^{\ell}s_i^p(A)=\sum_{i=1}^{k}s_i^p(A)\quad \hbox{and}\quad \sum_{i=1}^{k-\ell}s_i^p(B)=\sum_{i=1}^{k}s_i^p(B).
\end{equation}
Since $A$ and $B$ are both nonzero, we have $$\displaystyle\sum_{i=1}^{k}s_i^p(A)>0\quad {\rm and}\quad   \displaystyle\sum_{i=1}^{k}s_i^p(B)>0,$$
 which implies $\ell \geq 1$ and $k-\ell\geq 1$, i.e., $1\leq \ell\leq k-1.$ With (\ref{addd}), it follows that
  \begin{equation}\notag
\sum_{i=\ell+1}^{k}s_i^p(A)=0\quad \hbox{and} \quad \displaystyle \sum_{i=k-\ell+1}^{k}s_k^p(B)=0,
\end{equation}
which implies    $s_{\ell+1}(A)=0$ and $s_{k-\ell+1}(B)=0.$ Therefore,  $$\mathrm{rank}(A)\leq \ell \quad{\rm and } \quad\mathrm{rank}(B)\leq k-\ell.$$ Since $A\perp B$, we have $$\mathrm{rank}(A+B)=\mathrm{rank}(A)+\mathrm{rank}(B)\leq \ell+k-\ell=k.$$ \end{proof}

\begin{lemma}\label{lemma8}
Let $A, B\in M_n$ be  two positive semidefinite matrices,  $\gamma>1$ be a real number and  $k\leq n$ be a positive  integer. Suppose
 \begin{equation}\label{CH3AB}
\sum_{i=1}^{k}\lambda_{i}^{\gamma}(A+\alpha B)\leq \sum_{i=1}^{k}\lambda_i^{\gamma}(A)+\sum_{i=1}^{k}\lambda_i^{\gamma}(\alpha B) \quad \hbox{for all\quad $0<\alpha<1$ }
\end{equation} and $U^*AU=\mathrm{diag}(\lambda_1(A),\ldots,\lambda_n(A))$ for some unitary  matrix $U\in M_n$.
 \item[$\mathrm{ (a)}$]If  $\lambda_k(A)=0,$  then $A\perp B$.
\item[$\mathrm{ (b)}$]
If $\lambda_k(A)>0,$ then $U^*BU=0_{k+\ell}\oplus \hat{B}$
with $\hat{B}\in M_{n-k-\ell}$, where $\ell$ is the largest integer such that $\lambda_{k+\ell}(A)=\lambda_k(A)$.
\end{lemma}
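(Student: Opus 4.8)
The plan is to extract first-order (in $\alpha$) information from the hypothesis (\ref{CH3AB}) and to squeeze it against a matching lower bound. Since eigenvalues are unitarily invariant and both $A\perp B$ and the block form are unitarily covariant, I would first conjugate by $U$ and assume without loss of generality that $A=\mathrm{diag}(\lambda_1(A),\dots,\lambda_n(A))$ is diagonal with entries in decreasing order; it then suffices to prove $B=0_{k+\ell}\oplus\hat B$ in case (b) and $A\perp B$ in case (a). Write $f(\alpha)=\sum_{i=1}^k\lambda_i^\gamma(A+\alpha B)$. Since $\lambda_i(\alpha B)=\alpha\lambda_i(B)$ for $\alpha>0$, the hypothesis reads $f(\alpha)-f(0)\le \alpha^\gamma\sum_{i=1}^k\lambda_i^\gamma(B)$. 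The decisive observation is that, as $\gamma>1$, this right-hand increment is $o(\alpha)$ as $\alpha\to0^+$; hence any lower bound of the form $f(\alpha)-f(0)\ge \alpha c+o(\alpha)$ with $c\ge0$ will force $c=0$.

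Let $\mu_1>\dots>\mu_r\ge0$ be the distinct eigenvalues of $A$ with multiplicities $m_1,\dots,m_r$, let $\mu_j=\lambda_k(A)$ be the eigenvalue at the cut, and set $d=m_1+\dots+m_{j-1}<k$, so that the positions carrying $\mu_j$ are $d+1,\dots,d+m_j$ and $k+\ell=d+m_j$. To build the lower bound I would choose an isometry $\hat U=[u_1,\dots,u_k]$ as follows: take $u_1,\dots,u_d$ to be an eigenbasis of $A$ for $\mu_1,\dots,\mu_{j-1}$, and take $u_{d+1},\dots,u_k$ to be eigenvectors for the top $k-d$ eigenvalues of the compression $P_jBP_j$, where $P_j$ is the spectral projection of $A$ onto its $\mu_j$-eigenspace. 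Using $\lambda_i^\gamma(A+\alpha B)=\lambda_i\big((A+\alpha B)^\gamma\big)$ together with Lemma \ref{CH3lemma22}, and then Lemma \ref{CH3lemma1} applied to the positive semidefinite matrix $A+\alpha B$ and the unit vectors $u_i$, I obtain $f(\alpha)\ge\sum_{i=1}^k\big(u_i^*(A+\alpha B)u_i\big)^\gamma=\sum_{i=1}^k(a_i+\alpha\beta_i)^\gamma$, where $a_i=u_i^*Au_i=\lambda_i(A)$ and $\beta_i=u_i^*Bu_i\ge0$. In particular $\sum_{i=1}^k a_i^\gamma=f(0)$, and the tangent-line inequality for the convex function $t\mapsto t^\gamma$ yields $f(\alpha)-f(0)\ge\alpha\gamma\sum_{i=1}^k a_i^{\gamma-1}\beta_i$, a lower bound whose first-order coefficient is a sum of non-negative terms.

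Combining the two bounds, dividing by $\alpha$, and letting $\alpha\to0^+$ forces $\sum_{i=1}^k a_i^{\gamma-1}\beta_i=0$, hence $a_i^{\gamma-1}\beta_i=0$ for each $i$. For $i\le d$ we have $a_i>\mu_j\ge0$, so $\beta_i=u_i^*Bu_i=0$, which forces $Bu_i=0$ because $B\ge0$. In case (a), $\mu_j=0$, so these $u_i$ span the range of $A$ and $Bu_i=0$ gives $BA=0$, i.e. $A\perp B$. In case (b), $\mu_j>0$, so additionally $a_i=\mu_j>0$ for $d<i\le k$ forces $\beta_i=0$, meaning the top $k-d$ eigenvalues of $P_jBP_j$ vanish; being positive semidefinite, $P_jBP_j=0$, and hence $B$ annihilates the entire $\mu_j$-eigenspace. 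Thus $B$ kills $\bigoplus_{i\le j}\mathrm{Eig}(A,\mu_i)$, the span of the first $k+\ell$ coordinates, which after undoing the conjugation gives $U^*BU=0_{k+\ell}\oplus\hat B$.

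The main obstacle is the degeneracy at the cut: when $\lambda_k(A)$ has multiplicity, a naive test frame controls only $k-d$ directions inside the $\mu_j$-eigenspace, not all $m_j=k+\ell-d$ of them. The device that resolves this is to align the degenerate part of $\hat U$ with the eigenvectors of the compression $P_jBP_j$, so that the vanishing of its top $k-d$ eigenvalues, together with positive semidefiniteness, upgrades to the vanishing of the whole block; this is exactly what makes the integer $\ell$ appear in the statement. A secondary point requiring care is the harmless interpretation of the coefficient $a_i^{\gamma-1}$ when $a_i=0$ (the corresponding terms contribute nothing to the first-order sum), which is what decouples case (a) from the degenerate block.
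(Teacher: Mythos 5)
Your proof is correct, and it shares the paper's overall skeleton: both arguments sandwich the increment $\sum_{i=1}^k\lambda_i^{\gamma}(A+\alpha B)-\sum_{i=1}^k\lambda_i^{\gamma}(A)$ between a nonnegative term linear in $\alpha$ and the upper bound $\alpha^{\gamma}\sum_{i=1}^k\lambda_i^{\gamma}(B)$, which is $o(\alpha)$ precisely because $\gamma>1$, and then kill the first-order coefficient term by term. The two genuine differences are in how the linear lower bound is produced and how the degenerate block is handled. For the lower bound, the paper works with the diagonal entries $b_i$ of $U^*BU$ in the fixed eigenbasis of $A$ and invokes weak majorization of diagonal entries by eigenvalues together with Theorem 3.26 of \cite{ZHAN2013} for the increasing convex function $x\mapsto x^{\gamma}$; you instead route through the Ky Fan maximum principle (Lemma \ref{CH3lemma22}) and McCarthy's inequality (Lemma \ref{CH3lemma1}) --- the same pair the paper uses to prove Lemma \ref{CH3lemma3} --- applied to a test frame of your choosing, followed by the tangent-line inequality, which even avoids the $o(\alpha)$ Taylor remainder on the lower side. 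More substantively, in case (b) the paper obtains $b_i=0$ for $i=k+1,\ldots,k+\ell$ by re-running the entire limiting argument with the index $i$ swapped in for $k$ (a step it states quite tersely), whereas you dispatch the whole $\lambda_k(A)$-eigenspace in one pass by aligning $u_{d+1},\ldots,u_k$ with eigenvectors of the compression $P_jBP_j$: since $k-d\geq 1$, the vanishing of its top $k-d$ eigenvalues forces the positive semidefinite compression to vanish, and $P_jBP_j=0$ with $B\geq 0$ gives $BP_j=0$, which makes the appearance of $\ell$ transparent and non-iterative. Your version buys a cleaner treatment of multiplicity at the cut (and your explicit handling of the coefficient $a_i^{\gamma-1}$ at $a_i=0$, valid because $\gamma>1$, is a point the paper passes over silently); the paper's version buys a shorter setup, since the fixed eigenbasis requires no tailored frame and the majorization step delivers the bound for all $i\leq k$ at once.
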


\begin{proof}
  Denote  the $i$-th diagonal entry of  $U^*BU$ by $b_i.$
 Then $\lambda_i(A)+\alpha b_{i}$ is  the $i$-th  diagonal entry of $U^*(A+\alpha B)U$.
It follows that   $$
\big(\lambda_1(A+\alpha B),\ldots,\lambda_k(A+\alpha B)
 \big )\succ_w \big (\lambda_{1}(A)+\alpha b_{1},\ldots,\lambda_{k}(A)+\alpha b_{k}\big).
$$
Notice that  $g(x)=x^{\gamma}$ $(x>0)$ is an increasing convex function when  $\gamma>1$. We can apply the  Theorem 3.26 in \cite{ZHAN2013} to obtain  $$\big(\lambda_1^{\gamma}(A+\alpha B),\ldots,\lambda_k^{\gamma}(A+\alpha B)
 \big)\succ_w \big ((\lambda_{1}(A)+\alpha b_{1})^{\gamma},\ldots,(\lambda_{k}(A)+\alpha b_{k})^{\gamma}\big).$$
 Thus, $\displaystyle\sum_{i=1}^{k}\lambda_{i}^{\gamma}(A+\alpha B)\geq \displaystyle\sum_{i=1}^{k}(\lambda_i(A)+\alpha b_i)^{\gamma}.$
 With the assumption in (\ref{CH3AB}), we can conclude  that
\begin{equation}\label{CH3eq1}
 \sum_{i=1}^{k}(\lambda_i(A)+\alpha b_{i})^{\gamma} \leq \sum_{i=1}^{k}\lambda_i^{\gamma}(A)+\sum_{i=1}^{k}\lambda_i^{\gamma}(\alpha B)\quad \hbox{for all\quad $0<\alpha<1$}.
 \end{equation}
  Let $f(\alpha)=\displaystyle\sum_{i=1}^{k}(\lambda_i(A)+\alpha b_{i})^{\gamma} -\displaystyle\sum_{i=1}^{k}\lambda_i^{\gamma}(A)-\displaystyle\sum_{i=1}^{k}\lambda_i^{\gamma}(\alpha B)$ be a function on $\alpha$.
 Then  we have
 \begin{equation}\label{CH3s1}
 f(\alpha)=f(0)+f^{'}(0)\alpha+o(\alpha)=
 \left[\sum_{i=1}^{k}\lambda_i^{\gamma-1}(A)b_{i}\gamma\right]\alpha+o(\alpha),
 \end{equation}
where a function $g(\alpha)=o(\alpha)$ means $ \lim\limits_{\alpha\to0}\frac{g(\alpha)}{\alpha}=0$.
Since $A$ and $B$ are both positive semidefinite, we have $\lambda_i(A)\geq 0$ and $ b_{i}\geq 0$ for all $i=1,\ldots,n.$ It follows that
$\displaystyle\sum_{i=1}^{k}\lambda_i^{\gamma-1}(A)b_{i}\gamma \geq 0.$
We claim that $\displaystyle\sum_{i=1}^{k}\lambda_i^{\gamma-1}(A)b_{i}\gamma=0$.  Otherwise, $\displaystyle\sum_{i=1}^{k}\lambda_i^{\gamma-1}(A)b_{i}\gamma>0$ leads to $f(\alpha)>0$ when $\alpha>0$ is sufficiently small, which contradicts (\ref{CH3eq1}).
It follows that $$\lambda_i(A)b_{i}=0 \quad  \hbox{for\quad} i=1,\ldots,k.$$

 For the case $\lambda_k(A)=0$, we may assume that $ t$ is the  largest integer  such that
 $\lambda_t(A)>0$. Then $U^*AU=\mathrm{diag}(\lambda_1(A),\ldots,\lambda_t(A))\oplus 0_{n-t}$ and  $b_i=0$  for  $i=1,\ldots,t.$
 Recall that $B$ is positive semidefinite. Thus, $U^*BU=0_{t}\oplus \hat{B}$ with $\hat{B}\in M_{n-t}.$ It follows  that $A\perp B.$

  For the case $\lambda_k(A)>0,$
  we first  have $b_i=0$ for all $i=1,\ldots,k$. Since $B$ is positive semidefinite, it follows that $U^*BU=0_k\oplus C$ with $C\in M_{n-k}.$ Recall that $\ell$ is the largest integer such that $\lambda_{k+\ell}(A)=\lambda_k(A)$.
  If $\ell=0$, then the proof is completed.  If $\ell>0$, then for any $i=k+1,\ldots,k+\ell$, replacing the role of $\lambda_k(A)+\alpha b_k$ with $\lambda_i(A)+\alpha b_i$ in the above argument, we can conclude $b_i=0$. Thus, we have $b_i=0$ for $i=1,\ldots,k+\ell.$ It follows that  $U^*BU=0_{k+\ell}\oplus \hat{B}$ with  $\hat{B}\in M_{n-k-\ell}$. \end{proof}
\begin{corollary}\label{c2}
Let $T,S\in  M_n$ be two matrices,  $p>2$ be a real number and  $k\leq n$ be a positive  integer. Suppose
$$\sum_{i=1}^k\lambda_i^{\frac{p}{2}}(T^*T+x^2S^*S)\leq \sum_{i=1}^k\lambda_i^{\frac{p}{2}}(T^*T)+\sum_{i=1}^k\lambda_i^{\frac{p}{2}}(x^2S^*S)$$
and
$$\sum_{i=1}^k\lambda_i^{\frac{p}{2}}(TT^*+x^2SS^*)\leq \sum_{i=1}^k\lambda_i^{\frac{p}{2}}(TT^*)+\sum_{i=1}^k\lambda_i^{\frac{p}{2}}(x^2SS^*)$$
for all $0<x<1,$ and $UTV=\mathrm{diag}(s_1(T),\ldots,s_n(T))$ for some unitary matrices $U,V\in M_n$.
\item[$\mathrm{ (1)}$]If  $s_k(T)=0,$  then $T\perp S$.
\item[$\mathrm{ (2)}$]
If $s_k(T)>0,$ then $ USV=0_{k+\ell}\oplus \hat{S}$ with $\hat{S}\in M_{n-k-\ell}$,
where $\ell$ is the largest integer such that $s_{k+\ell}(T)=s_k(T).$
\end{corollary}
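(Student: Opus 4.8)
The plan is to reduce the statement to two applications of Lemma \ref{lemma8}, one for the pair $(T^*T,S^*S)$ and one for the pair $(TT^*,SS^*)$, and then to transport the resulting structural conclusions from these Gram matrices back to $T$ and $S$ themselves. I would set $\gamma=p/2$; since $p>2$ we have $\gamma>1$, and all four matrices $T^*T,S^*S,TT^*,SS^*$ are positive semidefinite. Writing $\alpha=x^2$, which sweeps out $(0,1)$ as $x$ does, the two displayed hypotheses become exactly the inequality (\ref{CH3AB}) of Lemma \ref{lemma8} for $(A,B)=(T^*T,S^*S)$ and for $(A,B)=(TT^*,SS^*)$ with exponent $\gamma$.

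First I would fix the diagonalizing unitaries. From $UTV=\Sigma:=\mathrm{diag}(s_1(T),\ldots,s_n(T))$ one gets $T=U^*\Sigma V^*$, hence $V^*(T^*T)V=\Sigma^2$ and $U(TT^*)U^*=\Sigma^2$; thus $V$ diagonalizes $T^*T$ and $U^*$ diagonalizes $TT^*$ in the sense required by Lemma \ref{lemma8}, and in both cases the $i$-th eigenvalue is $s_i^2(T)$. In particular $\lambda_k(T^*T)=\lambda_k(TT^*)=s_k^2(T)$, and the largest index $\ell$ with $\lambda_{k+\ell}(T^*T)=\lambda_k(T^*T)$ coincides with the one defined in the statement via $s_{k+\ell}(T)=s_k(T)$, because the singular values are nonnegative.

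For part (1), $s_k(T)=0$ forces $\lambda_k(T^*T)=\lambda_k(TT^*)=0$, so Lemma \ref{lemma8}(a) applied to the two pairs yields $(T^*T)(S^*S)=0$ and $(TT^*)(SS^*)=0$. It then remains to convert these into $T\perp S$. Using $\ker(T^*T)=\ker(T)$, $\ker(TT^*)=\ker(T^*)$, $\mathrm{range}(S^*S)=\mathrm{range}(S^*)$ and $\mathrm{range}(SS^*)=\mathrm{range}(S)$, the first identity gives $\mathrm{range}(S^*)\subseteq\ker(T)$, i.e. $TS^*=0$, while the second gives $\mathrm{range}(S)\subseteq\ker(T^*)$, i.e. $T^*S=0$; together these are precisely $T\perp S$.

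For part (2), $s_k(T)>0$ gives $\lambda_k>0$, so Lemma \ref{lemma8}(b) applies. Applied to $(T^*T,S^*S)$ with diagonalizer $V$ it gives $V^*(S^*S)V=(SV)^*(SV)=0_{k+\ell}\oplus\hat B$, whose vanishing leading diagonal block forces the first $k+\ell$ columns of $SV$, and hence of $USV$, to be zero. Applied to $(TT^*,SS^*)$ with diagonalizer $U^*$ it gives $U(SS^*)U^*=(US)(US)^*=0_{k+\ell}\oplus\hat B'$, which forces the first $k+\ell$ rows of $US$, and hence of $USV$, to be zero. A matrix whose first $k+\ell$ rows and first $k+\ell$ columns all vanish has the form $0_{k+\ell}\oplus\hat S$ with $\hat S\in M_{n-k-\ell}$, which is the claim. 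I expect the only genuine work beyond bookkeeping to be this last transport step in each part: in part (2), extracting the two-sided block form of $USV$ from the one-sided (column, respectively row) vanishing delivered separately by the two applications, and in part (1), recognizing $(T^*T)(S^*S)=0$ together with $(TT^*)(SS^*)=0$ as equivalent to $T\perp S$ via the range and kernel identities above.
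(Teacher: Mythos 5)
Your proposal is correct and follows essentially the same route as the paper: both proofs set $\gamma=p/2$, $\alpha=x^2$, apply Lemma \ref{lemma8} twice---to $(T^*T,S^*S)$ with diagonalizer $V$ and to $(TT^*,SS^*)$ with diagonalizer $U^*$---and then transport the conclusions back to $T$ and $S$. Your write-up merely spells out the final bookkeeping (the kernel/range argument in part (1) and the row/column vanishing in part (2)) that the paper compresses into ``hence $T\perp S$'' and ``It follows that $USV=0_{k+\ell}\oplus\hat{S}$.''
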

\begin{proof}
If $s_k(T)=0$, then $\lambda_k(T^*T)=\lambda_k(TT^*)=0$. We can use Lemma  \ref{lemma8} twice to conclude that $T^*T\perp S^*S$ and  $TT^*\perp SS^*$, and hence $T\perp S.$

If $s_k(T)>0$, 
 then we have $$V^*T^*TV=\mathrm{diag}(s_1^2(T),\ldots,s_{n}^2(T))\quad
  \hbox{and}\quad  UTT^*U^*=\mathrm{diag}(s_1^2(T),\ldots,s_{n}^2(T)).$$
  Notice that $\lambda_{i}(TT^*)=\lambda_{i}(T^*T)=s^2_i(T)$ for $i=1,\ldots,n$. Thus,  $$\lambda_k(T^*T)=\lambda_k(TT^*)=s_k^2(T)>0$$ and $\ell$ is the largest integer such that $$\lambda_{k+\ell}(TT^*)=\lambda_{k}(TT^*)\quad  \hbox{and} \quad \lambda_{k+\ell}(T^*T)=\lambda_{k}(T^*T).$$ Then we use Lemma \ref{lemma8} twice to conclude that
$$VS^*SV=0_{k+\ell}\oplus C\quad \hbox{and}\quad USS^*U^*=0_{k+\ell}\oplus D.$$
It follows that
$USV=0_{k+\ell}\oplus \hat{S}.$
\end{proof}
The following result  originates from the
 last two paragraphs of the proof of Theorem 2.1 in  \cite{FHLS4}.
\begin{lemma}\label{lemma9}
Let $\phi:M_{mn}\to M_{mn}$ be a linear map.
Suppose for any unitary matrix $X \in M_m$ and  integer  $1\leq i\leq m$, there exists a unitary matrix $W_X$    such that
\begin{equation}\notag
\phi(XE_{ii}X^*\otimes B)=W_X(E_{ii}\otimes
\varphi_{i,X} (B)){W}_X^*\quad \hbox{for all\quad}B\in M_n,
\end{equation}
where $\varphi_{i,X}$ is the identity map or the transposition map and $W_I=I_{mn}$. Then
$$\phi(A\otimes B)=\varphi_{1}(A)\otimes \varphi_2(B)\quad \hbox{for all\quad}A\in M_{m} \hbox{ and } B\in M_n,$$
where $\varphi_1$ is a linear map and $\varphi_2$ is the identity map or the transposition map.
\end{lemma}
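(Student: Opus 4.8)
The plan is to bootstrap from the diagonal case, where $W_I=I_{mn}$ gives explicit information, to all rank-one projections, and then to all of $M_m$ by linearity. Setting $X=I$ in the hypothesis yields $\phi(E_{ii}\otimes B)=E_{ii}\otimes\varphi_{i,I}(B)$ for every $i$ and $B$. Two elementary consequences of the hypothesis will be used throughout: since $\phi(P\otimes B)$ is unitarily similar to $E_{ii}\otimes\varphi_{i,X}(B)$ for any rank-one projection $P=XE_{ii}X^{*}$, we have $\mathrm{rank}\,\phi(P\otimes B)=\mathrm{rank}\,B$, so $\phi(P\otimes B)$ has rank $n$ whenever $B$ is invertible; and because the same $W_X$ serves all indices $i$, any two orthogonal rank-one projections $P\perp P'$ (completed to a common eigenbasis $X$) are sent to orthogonal images $\phi(P\otimes B)\perp\phi(P'\otimes B)$.

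The first and hardest step is to show that all the maps $\varphi_{i,I}$ coincide, i.e. they are all the identity or all the transposition; write $\varphi_2$ for the common map. Suppose not, say $\varphi_{p,I}=\mathrm{id}$ and $\varphi_{q,I}=T$, where $T$ denotes transposition. Let $X_\theta$ be the rotation in the $(p,q)$-plane, $P_\theta=X_\theta E_{pp}X_\theta^{*}$ and $P_\theta^{\perp}=X_\theta E_{qq}X_\theta^{*}$, so that $\theta\mapsto P_\theta$ is a continuous path of rank-one projections from $E_{pp}$ to $E_{qq}$. Evaluating $\phi((E_{pp}+E_{qq})\otimes I_n)$ two ways (via $X_\theta$ and via $X=I$) shows $W_{X_\theta}$ commutes with the projection $(E_{pp}+E_{qq})\otimes I_n$ onto the two relevant blocks $H_1\cong\mathbb C^2\otimes\mathbb C^n$, so $\phi(P_\theta\otimes B)$ is supported on $H_1$ with range and co-range both equal to an $n$-dimensional subspace $R_\theta\subseteq H_1$; moreover $\phi(P_\theta\otimes B)+\phi(P_\theta^{\perp}\otimes B)=E_{pp}\otimes B+E_{qq}\otimes B^{T}=\mathrm{diag}(B,B^{T})$. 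Using the orthogonality noted above, $\phi(P_\theta^{\perp}\otimes B)$ annihilates $R_\theta$, so each $\mathrm{diag}(B,B^{T})$ maps $R_\theta$ into $R_\theta$. Since the algebra generated by $\{\mathrm{diag}(B,B^{T}):B\in M_n\}$ is all of $M_n\oplus M_n$ (a commutator computation, valid for $n\ge 2$), its only $n$-dimensional invariant subspaces are the two coordinate blocks $e_p\otimes\mathbb C^n$ and $e_q\otimes\mathbb C^n$; hence $R_\theta$ is one of these for every $\theta$. But $\theta\mapsto R_\theta$ is continuous (the rank is constantly $n$) with values in a two-point set, so it is constant, whereas $R_0=e_p\otimes\mathbb C^n$ and $R_{\pi/2}=e_q\otimes\mathbb C^n$ differ. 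This contradiction rules out the mixed case and is the main obstacle of the proof.

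With $\varphi_{i,I}\equiv\varphi_2$ in hand, $\phi(I_m\otimes B)=\sum_i E_{ii}\otimes\varphi_2(B)=I_m\otimes\varphi_2(B)$. For a general unitary $X$, summing $\sum_i XE_{ii}X^{*}=I_m$ gives
$$W_X\Big(\sum_i E_{ii}\otimes\varphi_{i,X}(B)\Big)W_X^{*}=I_m\otimes\varphi_2(B)\qquad(B\in M_n).$$
The right-hand side generates the factor $I_m\otimes M_n$, whereas if the $\varphi_{i,X}$ were not all equal the left-hand side would generate $M_n\oplus M_n$, an algebra with two-dimensional centre; comparing centres forces all $\varphi_{i,X}$ to agree, say $\varphi_{i,X}\equiv\varphi_X$. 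The displayed identity then says $W_X$ normalises $I_m\otimes M_n$, so $W_X=U_X\otimes V_X$ for unitaries $U_X\in M_m$ and $V_X\in M_n$, and $V_X\varphi_X(B)V_X^{*}=\varphi_2(B)$ for all $B$. Since transposition is an anti-automorphism it cannot equal an inner automorphism for $n\ge 2$, this last identity forces $\varphi_X=\varphi_2$ and $V_X$ scalar, whence $W_X=U_X\otimes I_n$.

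Finally I would assemble the product form. For any rank-one projection $P=XE_{ii}X^{*}$,
$$\phi(P\otimes B)=(U_X\otimes I_n)(E_{ii}\otimes\varphi_2(B))(U_X^{*}\otimes I_n)=(U_XE_{ii}U_X^{*})\otimes\varphi_2(B),$$
so setting $\varphi_1(P):=U_XE_{ii}U_X^{*}$ gives $\phi(P\otimes B)=\varphi_1(P)\otimes\varphi_2(B)$ on rank-one projections. Because rank-one projections span $M_m$, fixing any $B_0$ with $\varphi_2(B_0)\ne 0$ lets me define $\varphi_1(A)$ as the unique first tensor factor of $\phi(A\otimes B_0)=\varphi_1(A)\otimes\varphi_2(B_0)$, and this $\varphi_1$ is linear since $A\mapsto\phi(A\otimes B_0)$ is linear and $\,\cdot\otimes\varphi_2(B_0)$ is injective. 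For each fixed $B$ the linear maps $A\mapsto\phi(A\otimes B)$ and $A\mapsto\varphi_1(A)\otimes\varphi_2(B)$ agree on the spanning set of rank-one projections, hence everywhere, which yields $\phi(A\otimes B)=\varphi_1(A)\otimes\varphi_2(B)$ for all $A\in M_m$ and $B\in M_n$, with $\varphi_1$ linear and $\varphi_2$ the identity or the transposition, as required.
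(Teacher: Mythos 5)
Your proof is correct, but it takes a genuinely different route from the paper's. The paper disposes of both difficulties with one symmetry trick: summing over $i$ with a \emph{real symmetric} $S$ (on which the identity and the transposition agree) gives $\phi(I_m\otimes S)=W_X(I_m\otimes S)W_X^*$, and since $W_I=I_{mn}$ this forces $W_X$ to commute with every $I_m\otimes S$, whence $W_X=Z_X\otimes I_n$ immediately and for \emph{all} unitary $X$ at once --- your third paragraph recovers exactly this factorization, but via the heavier machinery of centre-dimension comparison of generated algebras plus the normaliser characterization of $I_m\otimes M_n$. For the equality of the maps $\varphi_{i,X}$, the paper applies the partial trace $\mathrm{tr}_1$ to $\phi$, observes that the resulting map $\mathrm{Tr}_1$ is linear hence continuous, and invokes connectedness of the set $\{xx^*\in M_m \mid x^*x=1\}$ of all rank-one projections to equalize the $\varphi_{i,X}$ simultaneously in $i$ and $X$; you instead split this into the diagonal case (your rotation path $X_\theta$, with constancy of the continuously varying range $R_\theta$ pitted against the two $n$-dimensional invariant subspaces of the algebra generated by $\{\mathrm{diag}(B,B^T):B\in M_n\}$, which is indeed all of $M_n\oplus M_n$ for $n\ge 2$ by your commutator computation) and the general-$X$ case (centre comparison). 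Both hinges are at bottom continuity-plus-connectedness, so the proofs are philosophically cousins, but yours routes through elementary representation theory of $M_n\oplus M_n$ where the paper uses a single linear functional-type reduction; note also that your reading of the hypothesis --- one $W_X$ serving all indices $i$ --- is the intended one, since the paper's own proof likewise sums $\sum_i\phi(XE_{ii}X^*\otimes S)$ against a single $W_X$. Your final assembly via uniqueness of the first tensor factor (fixing $B_0$ with $\varphi_2(B_0)\neq 0$ and extending over the spanning set of rank-one projections) is a careful, fully spelled-out rendering of what the paper compresses into ``by the linearity of $\phi$.'' Net comparison: the paper's argument is shorter and uniform in $X$; yours is longer but extracts the explicit form $W_X=U_X\otimes I_n$ and avoids introducing the partial trace, and every step (invertibility of $B$ for the rank-$n$ range, invariance of $R_\theta$ under all $\mathrm{diag}(B,B^T)$, the $n\ge 2$ hypotheses) checks out.
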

\begin{proof}
For any real symmetric matrix $S\in M_n$ and any  unitary matrix $X\in M_m$,
$${\phi}(I_m\otimes S)=\sum_{i=1}^m{\phi}(XE_{ii}X^*\otimes S)={W}_X(I_m\otimes S){W}_X^*.$$
Since $W_I=I_{mn}$, it follows that $$ {W}_X(I_m\otimes S){W}_X^*=W_I(I_m\otimes S){W}_I^*=I_m\otimes S.$$
Thus, ${W}_X$ commutes with $I_m\otimes S$ for all real symmetric $S\in M_n.$ This yields that ${W}_X=Z_X\otimes I_n$ for some unitary matrix $Z_X\in M_m$, and hence \begin{equation}\label{eq25}
{\phi}(XE_{ii}X^*\otimes B)=(Z_XE_{ii}Z_X^*)\otimes \varphi_{i,X}(B)\quad \hbox{for all\quad} i=1,\ldots,m\hbox{ and }B\in M_n.
\end{equation}
Define  linear maps $\hbox{tr}_1 : M_{mn}\to M_{n}$ and  $
\hbox{Tr}_1: M_{mn}\to M_{n}$ as
$$\hbox{tr}_1(A\otimes B)=\hbox{tr} (A)B\quad \hbox{and} \quad \hbox{Tr}_1(A\otimes
 B)=\hbox{tr}_1({\phi}(A\otimes B))$$
for all $A\in M_m$ and $B\in M_n.$ The map $\hbox{tr}_1$ is also called the partial trace function in quantum science.
Then
$$\hbox{Tr}_1(XE_{ii}X^*\otimes B)=\varphi_{i,X}(B).$$  Note that $\hbox{Tr}_1$ is linear and therefore continuous and the set
$$\{XE_{ii}X^*\mid 1\leq i\leq m, X\in M_m \hbox{ is unitary}\}=\{xx^*\in  M_m\mid x^*x=1\}$$
is connected. So, all the maps $\varphi_{i, X}$ are the same and hence we can rewrite (\ref{eq25}) as
$${\phi}(XE_{ii}X^*\otimes B)=(Z_XE_{ii}Z_X^*)\otimes \varphi_{2}(B)\quad \hbox{for all\quad} i=1,\ldots,m\hbox{ and }B\in M_n,$$
where $\varphi_2$ is the identity map or the transposition map.
By the linearity of ${\phi}$, it follows that
$${\phi}(A\otimes B)=\varphi_1(A)\otimes \varphi_{2}(B)\quad \hbox{for all\quad} A\in M_m\hbox{ and }B\in M_n$$ for some linear map $\varphi_1.$
\end{proof}
\par

Now we are ready to present the proof of Theorem 2.1.

\noindent
{\bf Proof of Theorem 2.1.} Notice that the $(p,k)$ norm reduces to the spectral norm when $k=1$. It was shown in  \cite{FHLS4}  that a linear map $\phi$ preserves the spectral norm of tensor products $A\otimes B$ for all $A\in M_m$ and $B\in M_n$ if and only if $\phi$ has form $A\otimes B\mapsto U(\varphi_1(A)\otimes \varphi_2(B))V$ for some unitary matrices $U,V\in M_{mn}$, where $\varphi_s$ is the identity map or the transposition map for $s=1,2$. So we need only consider the case when  $k\geq 2$ in the following discussion.  Since the sufficiency part is clear, we consider only the necessity part.

Suppose a linear map $\phi: M_{mn}\to M_{mn}$ satisfies (\ref{CH3t1}) and $k
\geq 2$. We need the following three claims.

\begin{claim}\label{cl1}
 For any unitary matrices $X\in M_m\hbox{ and } Y\in M_n$,  we have
$$\phi (XE_{ii}X^*\otimes YE_{jj}Y^*)\perp \phi(XE_{ii}X^*\otimes YE_{ss}Y^*)$$
 and
 $$\phi (XE_{jj}X^*\otimes YE_{ii}Y^*)\perp \phi(XE_{ss}X^*\otimes YE_{ii}Y^*)$$
for any possible $i,j,s$ with $j\neq s$. Moreover,
 $$\textnormal{rank}(\phi(XE_{ii}X^*\otimes Y(E_{jj}+E_{ss})Y^*))\leq  k$$ and $$\textnormal{rank}(\phi(X(E_{jj}+E_{ss})X^*\otimes YE_{ii}Y^*))\leq  k, $$
for any possible $i,j,s$ with $j\neq s $.
\end{claim}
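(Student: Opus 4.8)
The plan is to fix a rank-one projection $P=XE_{ii}X^{*}\in M_m$ together with two mutually orthogonal rank-one projections $Q_j=YE_{jj}Y^{*}$ and $Q_s=YE_{ss}Y^{*}$ in $M_n$ (with $j\neq s$), and to analyze the two images $T=\phi(P\otimes Q_j)$ and $S=\phi(P\otimes Q_s)$. First I would record the governing norm identity. For scalars $\alpha,\beta$ the matrix $\alpha Q_j+\beta Q_s$ is unitarily similar to $\mathrm{diag}(\alpha,\beta,0,\ldots,0)$, so $P\otimes(\alpha Q_j+\beta Q_s)$ has exactly the singular values $|\alpha|,|\beta|$; since $k\geq 2$, the invariance hypothesis (\ref{CH3t1}) and linearity of $\phi$ give
$$\|\alpha T+\beta S\|_{(p,k)}^{p}=\|P\otimes(\alpha Q_j+\beta Q_s)\|_{(p,k)}^{p}=|\alpha|^{p}+|\beta|^{p}.$$
In particular $\|T\|_{(p,k)}^{p}=\|S\|_{(p,k)}^{p}=1$ and $\|T\pm xS\|_{(p,k)}^{p}=1+x^{p}$ for every real $x$.

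Next I would feed this identity into Corollary \ref{c2}. Applying Corollary \ref{CH3c1} to the pair $(T,xS)$, and then to $(T^{*},xS^{*})$ while using $\|T^{*}\pm xS^{*}\|_{(p,k)}=\|T\pm xS\|_{(p,k)}$, the two left-hand sides both equal $2(1+x^{p})$, so for all $0<x<1$
$$\sum_{i=1}^{k}\lambda_i^{p/2}(T^{*}T+x^{2}S^{*}S)\leq 1+x^{p}\quad\text{and}\quad \sum_{i=1}^{k}\lambda_i^{p/2}(TT^{*}+x^{2}SS^{*})\leq 1+x^{p}.$$
Because $\sum_{i=1}^{k}\lambda_i^{p/2}(T^{*}T)=\|T\|_{(p,k)}^{p}=1$ and $\sum_{i=1}^{k}\lambda_i^{p/2}(x^{2}S^{*}S)=x^{p}$, these are precisely the two hypotheses of Corollary \ref{c2} with $\gamma=p/2>1$.

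Then I would invoke Corollary \ref{c2}. If $s_k(T)=0$ we are in case (1) and conclude $T\perp S$ immediately. The step I expect to be the main obstacle is case (2), where $s_k(T)>0$ forces $USV=0_{k+\ell}\oplus\hat S$ for unitaries with $UTV=\mathrm{diag}(s_1(T),\ldots,s_{mn}(T))$ and $\ell$ the largest index with $s_{k+\ell}(T)=s_k(T)$. I plan to eliminate this case by a perturbation argument. The block form gives $U(T+xS)V=\mathrm{diag}(s_1(T),\ldots,s_{k+\ell}(T))\oplus(D'+x\hat S)$ with $D'=\mathrm{diag}(s_{k+\ell+1}(T),\ldots,s_{mn}(T))$, and since $s_{k+\ell+1}(T)<s_k(T)$, for all sufficiently small $x>0$ every singular value of the second block stays strictly below $s_k(T)$ (here $S\neq 0$ forces $\hat S\neq 0$, so the admissible threshold for $x$ is positive). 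Hence the top $k$ singular values of $T+xS$ are exactly $s_1(T),\ldots,s_k(T)$, whence $\|T+xS\|_{(p,k)}^{p}=\sum_{i=1}^{k}s_i^{p}(T)=1$, contradicting the identity $\|T+xS\|_{(p,k)}^{p}=1+x^{p}$. Thus only case (1) survives and $T\perp S$.

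Finally, with $T\perp S$ and $\|T+S\|_{(p,k)}^{p}=2=\|T\|_{(p,k)}^{p}+\|S\|_{(p,k)}^{p}$, Lemma \ref{CH3le2} yields $\mathrm{rank}(\phi(P\otimes(Q_j+Q_s)))=\mathrm{rank}(T+S)\leq k$. The second pair of assertions, in which the two tensor factors trade roles, follows verbatim upon replacing $(P,Q_j,Q_s)$ by $(YE_{ii}Y^{*},\,XE_{jj}X^{*},\,XE_{ss}X^{*})$, since the singular-value computation underlying the norm identity is symmetric under swapping the factors.
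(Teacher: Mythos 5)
Your proposal is correct and follows essentially the same route as the paper's proof: the identity $\|T\pm xS\|_{(p,k)}^p=1+x^p$ fed into Corollary \ref{CH3c1} to obtain the two hypotheses of Corollary \ref{c2}, elimination of the case $s_k(T)>0$ by the small-$x$ perturbation giving $\|T+xS\|_{(p,k)}^p=1$, and then Lemma \ref{CH3le2} for the rank bound, with the factor-swapped statements handled symmetrically. Your only additions are cosmetic --- making the perturbation threshold explicit and packaging the norm computations as $\|\alpha T+\beta S\|_{(p,k)}^p=|\alpha|^p+|\beta|^p$ --- neither of which changes the argument.
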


\textit{Proof of Claim 1. } For simplicity, we denote $$T=\phi (XE_{ii}X^*\otimes YE_{jj}Y^*)\quad \hbox{and}\quad S=\phi(XE_{ii}X^*\otimes YE_{ss}Y^*).$$  We need to show  $$T\perp S \quad{\rm and }\quad \mathrm{rank}(T+S)\leq k.$$
With the assumption in (\ref{CH3t1}), we have
{$$\|\phi(XFX^{*}\otimes YGY^{*})\|_{(p,k)}=\|XFX^{*}\otimes YGY^{*}\|_{(p,k)}=\|F\otimes G\|_{(p,k)}$$
for all $F\in M_m$ and $G\in M_n$. It follows that $$\|T+xS\|_{(p,k)}^p=\|E_{ii}\otimes (E_{jj}+xE_{ss})\|^{p}_{(p,k)}=1+x^p,$$ 
$$\|T-xS\|_{(p,k)}^p=\|E_{ii}\otimes (E_{jj}-xE_{ss})\|^{p}_{(p,k)}=1+x^p,$$
$$\|T\|^p_{(p,k)}=1 \quad \hbox{and}\quad \|xS\|^p_{(p,k)}=x^p$$ for all $0<x<1$.}
 We can conclude from the above equalities that
\begin{equation}\label{CH3eqq1}
\|T+xS\|_{(p,k)}^p+\|T-xS\|_{(p,k)}^p=2\|T\|_{(p,k)}^p+2\|xS\|_{(p,k)}^p\quad \hbox{for all\quad } 0<x<1.
\end{equation}
Applying Corollary \ref{CH3c1} with $A=T$ and $B=xS$, we get
\begin{equation}\label{CH3eqq2}
\|T+xS\|_{(p,k)}^p+\|T-xS\|_{(p,k)}^p\geq 2\sum_{i=1}^k\lambda_i^{\frac{p}{2}}(T^*T+x^2S^*S)
\end{equation}
for all $0<x<1.$
Since $\|T\|_{(p,k)}^p=\displaystyle\sum_{i=1}^k\lambda_i^{\frac{p}{2}}(T^*T)$ and $\|xS\|_{(p,k)}^p=\displaystyle\sum_{i=1}^k\lambda_i^{\frac{p}{2}}(x^2S^*S),$
it follows from  (\ref{CH3eqq1}) and (\ref{CH3eqq2})  that
\begin{equation}\label{with1}
\sum_{i=1}^k\lambda_i^{\frac{p}{2}}(T^*T+x^2S^*S)\leq \sum_{i=1}^k\lambda_i^{\frac{p}{2}}(T^*T)+\sum_{i=1}^k\lambda_i^{\frac{p}{2}}(x^2S^*S)
\end{equation}
for all $0<x<1.$

Replacing the role of $(T,S)$ with $(T^*,S^*)$ in the above argument, we have
 \begin{equation}\label{with2}
\sum_{i=1}^k\lambda_i^{\frac{p}{2}}(TT^*+x^2SS^*)\leq \sum_{i=1}^k\lambda_i^{\frac{p}{2}}(TT^*)+\sum_{i=1}^k\lambda_i^{\frac{p}{2}}(x^2SS^*)
\end{equation}
for all $0<x<1.$
We claim that $s_k(T)=0$.  Otherwise, suppose
 $s_k(T)>0$. Then by (\ref{with1}) and (\ref{with2}), we can apply Corollary \ref{c2} to conclude that there exist  unitary matrices $U,V\in M_{n}$ such that
 $$UTV=\mathrm{diag}(s_1(T),\ldots,s_{mn}(T))\quad \hbox{and}\quad USV=0_{k+\ell}\oplus \hat{S},$$
 where $\ell$ is the largest integer such that $s_{k+\ell}(T)=s_k(T).$
 Thus,  there exists a sufficiently small number $t>0$
    such that the largest $k$ singular values of  $T+tS$ are $s_1(T),\ldots,s_k(T)$. Since $\|T\|_{(p,k)}^p=\|E_{ii}\otimes E_{jj}\|_{(p,k)}^p=1,$
 we have $$\|T+tS\|_{(p,k)}^p=\sum_{i=1}^ks_i^p(T+tS)=\sum_{i=1}^ks_i^p(T)=\|T\|_{(p,k)}^p=1,$$ which contradicts  the fact that $$\|T+xS\|_{(p,k)}^p=\|E_{ii}\otimes (E_{jj}+xE_{ss})\|_{(p,k)}^p=1+x^p\quad \hbox{for all\quad} 0<x<1.$$  So, our claim is correct, that is, $s_k(T)=0.$

 Now applying Corollary \ref{c2} again, we have $T\perp S.$ 	Notice that $$\|T+S\|_{(p,k)}^p=\|T\|^p_{(p,k)}+\|S\|_{(p,k)}^p.$$	Applying Lemma
 \ref{CH3le2} on $S$ and $T$ we have
 $$\mathrm{rank}(T+S)\leq k.$$   
 Similarly, we can also show that $$\phi (XE_{jj}X^*\otimes YE_{ii}Y^*)\perp \phi(XE_{ss}X^*\otimes YE_{ii}Y^*)$$
 and $$\textnormal{rank}(\phi(X(E_{jj}+E_{ss})X^*\otimes YE_{ii}Y^*))\leq  k $$
for any possible $i,j,s$ with $j\neq s$.

\begin{claim}\label{cl2}
For any unitary matrices $X\in M_m\hbox{ and } Y\in M_n$, we have
$$\phi (XE_{ii}X^*\otimes Y(E_{jj}+E_{ss})Y^*)\perp \phi(XE_{tt}X^*\otimes Y(E_{jj}+ E_{ss})Y^*)$$
whenever $i\neq t.$
\end{claim}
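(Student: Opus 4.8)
The plan is to prove $T\perp S$, where $T=\phi(XE_{ii}X^*\otimes Y(E_{jj}+E_{ss})Y^*)$ and $S=\phi(XE_{tt}X^*\otimes Y(E_{jj}+E_{ss})Y^*)$ with $i\neq t$ (and $j\neq s$; the degenerate case $j=s$ is already covered by Claim \ref{cl1}), by following the template used for Claim \ref{cl1}: produce the two weak-majorization inequalities required by Corollary \ref{c2} and then read off orthogonality. First I would use linearity to write $T\pm xS=\phi\big(X(E_{ii}\pm xE_{tt})X^*\otimes Y(E_{jj}+E_{ss})Y^*\big)$, so that the norm-preserving hypothesis (\ref{CH3t1}) together with unitary invariance gives, for every $0<x<1$,
\[
\|T\pm xS\|_{(p,k)}^p=\|(E_{ii}\pm xE_{tt})\otimes(E_{jj}+E_{ss})\|_{(p,k)}^p,\qquad
\|T\|_{(p,k)}^p=2,\qquad \|xS\|_{(p,k)}^p=2x^p,
\]
where $(E_{ii}\pm xE_{tt})\otimes(E_{jj}+E_{ss})$ has singular values $\{1,1,x,x\}$ and the last two equalities use $k\geq 2$.

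The second step is to verify the hypotheses of Corollary \ref{c2}. Applying Corollary \ref{CH3c1} with $A=T$ and $B=xS$ and rearranging yields
\[
\sum_{i=1}^k\lambda_i^{p/2}(T^*T+x^2S^*S)\leq \tfrac12\big(\|T+xS\|_{(p,k)}^p+\|T-xS\|_{(p,k)}^p\big).
\]
Since a top-$k$ sum of $p$-th powers of singular values never exceeds the sum over all singular values, each term $\|T\pm xS\|_{(p,k)}^p$ is at most $2+2x^p$, so the right-hand side is at most $2+2x^p=\|T\|_{(p,k)}^p+\|xS\|_{(p,k)}^p=\sum_{i=1}^k\lambda_i^{p/2}(T^*T)+\sum_{i=1}^k\lambda_i^{p/2}(x^2S^*S)$. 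This is precisely the first hypothesis of Corollary \ref{c2}, and repeating the argument with $(T,S)$ replaced by $(T^*,S^*)$---which changes none of the norms---gives the second.

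The last step is to invoke Corollary \ref{c2} with a decomposition $UTV=\mathrm{diag}(s_1(T),\ldots,s_{mn}(T))$, and this is where I expect the real obstacle. In Claim \ref{cl1} the parallelogram relation held with \emph{equality}, which forced $s_k(T)=0$ via a perturbation argument; for the present configuration that relation becomes a strict inequality exactly when $k\in\{2,3\}$, so $s_k(T)>0$ cannot be excluded that way. The remedy is the rank bound $\mathrm{rank}(T)\leq k$ already furnished by Claim \ref{cl1}, equivalently $s_{k+1}(T)=0$. If $s_k(T)=0$, part (1) of Corollary \ref{c2} gives $T\perp S$ immediately. If instead $s_k(T)>0$, then $s_{k+1}(T)=0<s_k(T)$ forces $\ell=0$, so part (2) yields $USV=0_k\oplus\hat S$ while $UTV=\mathrm{diag}(s_1(T),\ldots,s_k(T),0,\ldots,0)$; these are supported on complementary coordinate blocks, hence $T^*S=TS^*=0$. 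In either case $T\perp S$, and since the roles of $i$ and $t$ are symmetric this proves the claim for all $i\neq t$.
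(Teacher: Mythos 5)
Your proof is correct and follows essentially the same route as the paper's: both derive the hypotheses of Corollary \ref{c2} from Corollary \ref{CH3c1} together with the exact norm values supplied by (\ref{CH3t1}) (your single monotonicity bound $\|T\pm xS\|_{(p,k)}^p\leq 2+2x^p$ neatly subsumes the paper's explicit three-case enumeration for $k=2$, $k=3$, and $k\geq 4$), and both then invoke the rank bound $\mathrm{rank}(T)\leq k$ from Claim \ref{cl1} to settle the case $s_k(T)>0$, forcing $\ell=0$ so that $UTV$ and $USV$ live on complementary blocks and $T\perp S$. No gaps.
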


\textit{Proof of Claim 2. }  For simplicity, we denote
$$T=\phi (XE_{ii}X^*\otimes Y(E_{jj}+E_{ss})Y^*) \quad {\rm and }\quad S=\phi(XE_{tt}X^*\otimes Y(E_{jj} +E_{ss})Y^*).$$ We need to show   $S\perp T.$
Applying Corollary \ref{CH3c1} on $T$ and $xS$, we get \begin{equation}\label{CH3e12}
\|T+xS\|_{(p,k)}^p+\|T-xS\|_{(p,k)}^p\geq 2\sum_{i=1}^k\lambda_i^{\frac{p}{2}}(T^*T+x^2S^*S)
\end{equation}
for all $0<x<1.$
With the assumption in (\ref{CH3t1}),
we have
\begin{itemize}
\item[(\romannumeral1)] $\|T+xS\|_{(p,k)}^p+\|T-xS\|_{(p,k)}^p=2\|T\|_{(p,k)}^p+2\|xS\|_{(p,k)}^p$ for the case $k\geq 4$;
\item[(\romannumeral2)]$\|T+xS\|_{(p,k)}^p+\|T-xS\|_{(p,k)}^p=2\|T\|_{(p,k)}^p+\|xS\|_{(p,k)}^p$ for the case $k=3$;
\item[(\romannumeral3)]$\|T+xS\|_{(p,k)}^p+\|T-xS\|_{(p,k)}^p=2\|T\|_{(p,k)}^p$ for the case $k=2.$
\end{itemize}
So we can conclude that for any integer $k\geq 2$,
\begin{equation}
\begin{split}
\|T+xS\|_{(p,k)}^p+\|T-xS\|_{(p,k)}^p&\leq  2\|T\|_{(p,k)}^p+2\|xS\|_{(p,k)}^p\\
&=2\sum_{i=1}^k\lambda_i^{\frac{p}{2}}(T^*T)+2\sum_{i=1}^k\lambda_i^{\frac{p}{2}}(x^2S^*S).
\end{split}
\end{equation}
 It follows that
\begin{equation}\label{CH3eq20}
\sum_{i=1}^k\lambda_i^{\frac{p}{2}}(T^*T+x^2S^*S)\leq \sum_{i=1}^k\lambda_i^{\frac{p}{2}}(T^*T)+\sum_{i=1}^k\lambda_i^{\frac{p}{2}}(x^2S^*S)
\end{equation}
for all $0<x<1.$
The above observations also hold if $(T,S)$ is replaced by $(T^*,S^*)$, that is,
 \begin{equation}\label{CH3eq21}
\sum_{i=1}^k\lambda_i^{\frac{p}{2}}(TT^*+x^2SS^*)\leq \sum_{i=1}^k\lambda_i^{\frac{p}{2}}(TT^*)+\sum_{i=1}^k\lambda_i^{\frac{p}{2}}(x^2SS^*)
\end{equation}
for all $0<x<1.$

If $s_k(T)=0$, then applying Corollary \ref{c2}  we have $T\perp S$. Otherwise, $s_k(T)>0$.
 Notice that  Claim \ref{cl1} implies $\mathrm{rank}(T)\leq k.$ Thus, by (\ref{CH3eq20}) and (\ref{CH3eq21}), we can apply Corollary \ref{c2} to conclude that  there exist  unitary matrices $U,V\in M_{mn}$ such that  $$UTV=\mathrm{diag}(s_1(T),\ldots,s_{k}(T))\oplus 0_{mn-k}\quad \hbox{and}\quad USV=0_{k}\oplus \hat{S}.$$
It follows that $T\perp S.$ This completes the proof.

\begin{claim}\label{cl3}
 For any unitary matrices  $X\in M_m$ and $Y\in M_n$,
 $$\phi(XE_{ii}X^*\otimes YE_{jj}Y^*)\perp \phi(XE_{rr}X^*\otimes YE_{ss}Y^*)\quad \hbox{for any }(i,j)\neq (r,s). $$
\end{claim}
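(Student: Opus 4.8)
The plan is to split the claim according to how the two index pairs differ and to observe that two of the three resulting cases are already contained in Claim \ref{cl1}. Abbreviating $A_{ij}:=\phi(XE_{ii}X^*\otimes YE_{jj}Y^*)$, the hypothesis $(i,j)\neq(r,s)$ means exactly one of: $i=r$ with $j\neq s$; $i\neq r$ with $j=s$; or $i\neq r$ with $j\neq s$. The first case $A_{ij}\perp A_{rj}$ (common first factor) is the first orthogonality relation of Claim \ref{cl1}, and the second case (common second factor) is its companion relation. Hence the only genuine case is $i\neq r$ and $j\neq s$, and that is where all the work lies.

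For that case the idea is to start from the aggregated orthogonality produced by Claim \ref{cl2} and to peel it apart into individual summands by means of Lemma \ref{adle2}, which is precisely the tool that turns $(A+B)\perp C$ together with $A\perp B$ into $A\perp C$ and $B\perp C$. Concretely, fix $i\neq r$ and $j\neq s$ and set $T_1=A_{ij}$, $T_2=A_{is}$, $S_1=A_{rj}$, $S_2=A_{rs}$. Claim \ref{cl2} supplies $(T_1+T_2)\perp(S_1+S_2)$, while Claim \ref{cl1}, applied with common first factor $E_{ii}$ and then with common first factor $E_{rr}$, supplies the two ``diagonal'' relations $T_1\perp T_2$ and $S_1\perp S_2$.

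The execution is then two applications of Lemma \ref{adle2}. First I would take $(A,B,C)=(T_1,T_2,\,S_1+S_2)$: from $(T_1+T_2)\perp(S_1+S_2)$ and $T_1\perp T_2$ the lemma yields $T_1\perp(S_1+S_2)$. Using that $\perp$ is symmetric, which is immediate since $A^*B=AB^*=0$ is equivalent to $B^*A=BA^*=0$, I would rewrite this as $(S_1+S_2)\perp T_1$ and apply Lemma \ref{adle2} a second time with $(A,B,C)=(S_1,S_2,T_1)$, now invoking $S_1\perp S_2$. This gives $S_2\perp T_1$, that is, $A_{rs}\perp A_{ij}$, which is exactly the assertion of Claim \ref{cl3} in the remaining case.

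The only obstacle is the off-diagonal case $i\neq r$, $j\neq s$; everything else is a direct quotation of Claim \ref{cl1}. The single point one must get right is that one pass of the splitting lemma separates only one of the two sums, so it has to be applied twice, once on each sum, consuming the two diagonal orthogonalities from Claim \ref{cl1} in turn. No norm estimate or singular-value computation enters here, since the analytic content has already been discharged into Claims \ref{cl1} and \ref{cl2}.
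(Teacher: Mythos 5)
Your proposal is correct and follows essentially the same route as the paper: the same case split via Claim~\ref{cl1}, and in the genuine case $i\neq r$, $j\neq s$, the same two applications of Lemma~\ref{adle2} to the aggregated orthogonality from Claim~\ref{cl2} together with the two diagonal relations from Claim~\ref{cl1} (the paper leaves the symmetry of $\perp$ implicit, which you justify explicitly).
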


\textit{Proof of Claim 3. }
If $i=r$ or $j=s$,  then  applying Claim \ref{cl1} directly we have
$$\phi(XE_{ii}X^*\otimes Y E_{jj}Y^*)\perp \phi (XE_{rr}X^*\otimes YE_{ss}Y^*).$$
Next, we suppose that  $i\neq r$ and $j\neq s$.
With Claim \ref{cl1},  we have
\begin{equation}\label{as31}
\phi(XE_{ii}X^*\otimes Y E_{jj}Y^*)\perp \phi (XE_{ii}X^*\otimes YE_{ss}Y^*)
\end{equation}
and
\begin{equation}\label{as32}
\phi(XE_{rr}X^*\otimes Y E_{jj}Y^*)\perp \phi (XE_{rr}X^*\otimes YE_{ss}Y^*).
\end{equation}
With Claim \ref{cl2}, we have
\begin{equation}\label{as33}
\phi(XE_{ii}X^*\otimes Y (E_{jj}+E_{ss})Y^*)\perp \phi (XE_{rr}X^*\otimes Y(E_{jj}+E_{ss})Y^*).
\end{equation}
Applying Lemma \ref{adle2},  we conclude from (\ref{as31}) and (\ref{as33}) that
\begin{equation}\label{as34}
\phi(XE_{ii}X^*\otimes Y E_{jj}Y^*)\perp \phi (XE_{rr}X^*\otimes Y(E_{jj}+E_{ss})Y^*).
\end{equation}
Then applying Lemma \ref{adle2} again,  we can conclude from (\ref{as32}) and (\ref{as34}) that
$$\phi(XE_{ii}X^*\otimes Y E_{jj}Y^*)\perp \phi (XE_{rr}X^*\otimes YE_{ss}Y^*).$$

 Now we prove that $\phi$ has the desired form (\ref{CH3t2}).
For any unitary matrix $Y\in M_n$, applying Claim \ref{cl1}  and Claim \ref{cl3}   we know
$$\mathscr{F}=\{\phi(E_{ii}\otimes YE_{jj}Y^*):i=1,\ldots,m\hbox{ and }j=1\ldots,n\}$$
is a set of $mn$ orthogonal matrices in $M_{mn}$.
By Claim \ref{cl1},   each matrix in $\mathscr{F}$ has   exactly one nonzero singular value, which equals 1. Thus,
there exist  unitary matrices $U_Y,V_Y\in M_{mn}$  such that
\begin{equation}\label{ch2eq6}
\phi(E_{ii}\otimes YE_{jj}Y^*)=U_Y(E_{ii}\otimes E_{jj})V_Y^*
\end{equation}
for all $ i=1,\ldots, m \hbox{ and } j=1,\ldots, n.$
Without loss of generality, we may assume that $U_I=V_I=I_{mn}$, i.e.,
\begin{equation}\label{ch2eq7}
\phi(E_{ii}\otimes E_{jj})=E_{ii}\otimes E_{jj}
\end{equation}
for all $i=1,\ldots, m \hbox{ and } j=1,\ldots, n.$
By (\ref{ch2eq6}) and (\ref{ch2eq7}), we have
\begin{itemize}
\item[(\romannumeral1)] $I_{mn}=\phi(I_m\otimes I_n)=U_Y(I_m\otimes I_n)V_Y^*$;
\item[(\romannumeral2)] $E_{ii}\otimes I_n=\phi(E_{ii}\otimes I_n)=U_Y(E_{ii}\otimes I_n)V_Y^*  \hbox{ for all } i=1,\ldots,m.$
\end{itemize}
It follows that  $U_Y=V_Y$ and   $U_Y$ commutes with $E_{ii}\otimes I_n$ for all $i=1,\ldots, m$.
Therefore,  $U_Y$ commutes with $E_{11}\otimes I_n+2E_{22}\otimes I_n+\cdots+mE_{mm}\otimes I_n,$  which implies that $U_Y=\bigoplus\limits_{i=1}^{m}U_{i,Y}$ with unitary matrices $U_{i,Y}\in M_n$.  It follows that
$$\phi(E_{ii}\otimes YE_{jj}Y^*)=E_{ii}\otimes U_{i,Y}E_{jj}U_{i,Y}^*.$$
So far, we have showed that for any unitary  matrix $Y\in M_n$, there exists a unitary matrix  $U_{i,Y}\in M_n$ depending on $i$ and $Y$ such that
$$\phi(E_{ii}\otimes YE_{jj}Y^*)=E_{ii}\otimes U_{i,Y}E_{jj}U_{i,Y}^*\quad \hbox{for\quad}j=1,\ldots,n.$$
{By the linearity of $\phi$,}
 we conclude from the above equation that for any $i=1,\ldots, m$, there exists a linear map
 $\psi_i$ such that
$$\phi(E_{ii}\otimes B)=E_{ii}\otimes \psi_{i}( B) \quad \hbox{for all\quad} B \in M_n.$$
  Let $\hat{k}=\min\{k,n\}$. Then it is easy to check that
$$\|\psi_i(B)\|_{(p,\hat{k})}=\|E_{ii}\otimes \psi_i(B)\|_{(p,k)}=\|E_{ii}\otimes B\|_{(p,k)}=\|B\|_{(p,\hat{k})}$$ for all $B\in M_n$. That is, $\psi_i$ is a linear map on $M_n$  preserving the $(p,\hat{k})$ norm.
Thus, by Theorem 1 in \cite{LT1988},
$\psi_i$ has  form $B\mapsto W_i B\widetilde{W}_i$ or $B\mapsto W_i B^T\widetilde{W}_i$ for some unitary matrices
$W_i, \widetilde{W}_i\in M_n.$
Let $W=\bigoplus\limits_{i=1}^m W_i$
and $\widetilde{W}=\bigoplus\limits_{i=1}^m\widetilde{W}_i$. It follows  that for any
$i=1,\ldots, m,$
$$\phi(E_{ii}\otimes B)=W(E_{ii}\otimes
\varphi_i (B))\widetilde{W}\quad \hbox{for all\quad}B\in M_n,$$
where $\varphi_i$ is the identity map or the transposition map.
Recall  that $I_{mn}=\phi(I_m\otimes I_n). $ Thus, we have $\widetilde{W} = W^*$.

Applying  Claim \ref{cl3}  again, we can repeat the  same argument  above to  show that for any unitary matrix $X \in M_m$ and any integer $1\leq i\leq m$, there exists a unitary matrix $W_X$ such that
\begin{equation}\notag
\phi(XE_{ii}X^*\otimes B)=W_X(E_{ii}\otimes
\varphi_{i,X} (B)){W}_X^*\quad \hbox{for all\quad}B\in M_n,
\end{equation}
where $\varphi_{i,X}$ is the identity map or the transposition map. We may further assume that $W_I=I_{mn}$. Then applying Lemma \ref{lemma9}, we have
$$\phi(A\otimes B)=\varphi_1(A)\otimes \varphi_2(B) \quad \hbox{for all\quad}A\in M_m\hbox{ and }B\in M_n,$$
where  $\varphi_2$ is the identity map or the transposition map and  $\varphi_1$  is a linear map on $M_m$. Let $\widetilde{k}=\min\{k,m\}.$ It is easy to verify that $\varphi_1$ is a linear map on $M_m$ preserving the $(p,\widetilde{k})$ norm. Hence, $\varphi_1$ also has the form $A\mapsto UAV$ or $A\mapsto UA^TV$ for some unitary  matrices $U, V\in M_m$. This completes the proof.\qed

\section{Multipartite system}

In this section we extend Theorem \ref{CH3theo1} to multipartite systems. The proof of the following lemma can be found in the proof of Theorem 3.1 in \cite{FHLS4}. For completeness, we present it as follows.
\begin{lemma}\label{lemma10}
Given  an integer $m\geq 2$,
let  $n_i \geq 2 $ be integers for  $i=1,\ldots,m$ and  $N=\prod\limits_{i=1}^{m}n_i$.
{Let   $\phi:M_N\to M_N$ be a linear map. Suppose  for any unitary matrices $X_i\in M_{n_i}$ and any { integers $ 1\leq j_i\leq n_i$ with  $ 1\leq i\leq  m-1$,} there exists a  unitary matrix $W_X\in M_N$ } depending on $X=(X_1,\ldots,X_{m-1})$ such that
\begin{equation}\label{ch}
\phi\left (\bigotimes\limits_{i=1}^{m-1}X_iE_{j_ij_i}
X_i^*\otimes B\right )=
W_{X}\left (\bigotimes\limits_{i=1}^{m-1}E_{j_ij_i}\otimes\varphi_{j_1,\ldots,j_{m-1},X}(B)\right )W_{X}^*
\end{equation}
for all $B\in M_{n_m},$
where $\varphi_{j_1,\ldots,j_{m-1},X}$ is the
 identity map or the transposition map and $W_X=I_N$ when $X=(I_{n_1},\ldots,I_{n_{m-1}})$.
 Then $$\phi(A_1\otimes \cdots\otimes  A_{m-1}\otimes B)=\varphi_1(A_1\otimes \cdots \otimes A_{m-1})\otimes \varphi_2(B),$$
 where $\varphi_1$ is a linear map and $\varphi_2$ is the identity map or the transposition map.
\end{lemma}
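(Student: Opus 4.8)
The plan is to follow closely the strategy used in the proof of Lemma \ref{lemma9}, now adapted to the tensor structure of the first $m-1$ factors. Write $N_0=\prod_{i=1}^{m-1}n_i$, so that $N=N_0\,n_m$ and each $\bigotimes_{i=1}^{m-1}X_iE_{j_ij_i}X_i^*$ lives in $M_{N_0}$. The first step is to extract a commutation relation for $W_X$. Fix unitaries $X_i\in M_{n_i}$ and a real symmetric matrix $S\in M_{n_m}$. Since $\sum_{j_i=1}^{n_i}X_iE_{j_ij_i}X_i^*=I_{n_i}$ for each $i$, summing the hypothesis (\ref{ch}) over all indices $j_1,\ldots,j_{m-1}$ and using that $\varphi_{j_1,\ldots,j_{m-1},X}(S)=S$ for symmetric $S$ yields
$$\phi\!\left(I_{N_0}\otimes S\right)=W_X\left(I_{N_0}\otimes S\right)W_X^*.$$
The left-hand side does not depend on $X$, so evaluating at $X=(I_{n_1},\ldots,I_{n_{m-1}})$, where $W_X=I_N$, shows $\phi(I_{N_0}\otimes S)=I_{N_0}\otimes S$. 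Hence $W_X$ commutes with $I_{N_0}\otimes S$ for every real symmetric $S\in M_{n_m}$.

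Taking $S=E_{jj}$ forces $W_X$ to be block diagonal with respect to the decomposition of $\mathbb{C}^N=\mathbb{C}^{N_0}\otimes\mathbb{C}^{n_m}$ into the subspaces $\mathbb{C}^{N_0}\otimes e_j$, and then taking $S=E_{jk}+E_{kj}$ forces all these diagonal blocks to coincide; this is exactly the argument in Lemma \ref{lemma9} with $M_m,M_n$ replaced by $M_{N_0},M_{n_m}$. I therefore expect to conclude $W_X=Z_X\otimes I_{n_m}$ for some unitary $Z_X\in M_{N_0}$. Substituting this back into (\ref{ch}) and using $(Z_X\otimes I_{n_m})(C\otimes D)(Z_X^*\otimes I_{n_m})=Z_XCZ_X^*\otimes D$ gives
$$\phi\!\left(\bigotimes_{i=1}^{m-1}X_iE_{j_ij_i}X_i^*\otimes B\right)=Z_X\!\left(\bigotimes_{i=1}^{m-1}E_{j_ij_i}\right)\!Z_X^*\otimes\varphi_{j_1,\ldots,j_{m-1},X}(B).$$

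The next step is to show that all the maps $\varphi_{j_1,\ldots,j_{m-1},X}$ coincide. Mimicking Lemma \ref{lemma9}, I would introduce the partial trace $\mathrm{tr}_1:M_N\to M_{n_m}$, $\mathrm{tr}_1(C\otimes B)=\mathrm{tr}(C)B$ for $C\in M_{N_0}$, and set $\mathrm{Tr}_1=\mathrm{tr}_1\circ\phi$. Applying $\mathrm{tr}_1$ to the displayed identity and using $\mathrm{tr}\big(Z_X(\bigotimes_{i}E_{j_ij_i})Z_X^*\big)=\mathrm{tr}(\bigotimes_i E_{j_ij_i})=1$ gives $\mathrm{Tr}_1(\bigotimes_{i}X_iE_{j_ij_i}X_i^*\otimes B)=\varphi_{j_1,\ldots,j_{m-1},X}(B)$. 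Since $\mathrm{Tr}_1$ is linear, hence continuous, and the parameter set $\{\bigotimes_{i=1}^{m-1}x_ix_i^*:x_i\in\mathbb{C}^{n_i},\ \|x_i\|=1\}$, which is precisely the range of $\bigotimes_{i}X_iE_{j_ij_i}X_i^*$, is a continuous image of a product of connected unit spheres and so is connected, for each fixed non-symmetric $B$ the continuous map into the two-point set $\{B,B^T\}$ must be constant. Thus all the $\varphi_{j_1,\ldots,j_{m-1},X}$ equal a single map $\varphi_2$, the identity or the transposition.

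Finally, because the rank-one projections $xx^*$ span $M_{n_i}$ over $\mathbb{C}$, their tensor products span $M_{N_0}$; fixing any $B_0\neq0$ and using that $C\mapsto C\otimes\varphi_2(B_0)$ is an injective linear map lets me define a linear map $\varphi_1$ on $M_{N_0}$ through $\phi(A\otimes B_0)=\varphi_1(A)\otimes\varphi_2(B_0)$. Linearity of $\phi$ together with the previous step then upgrades this to $\phi(A\otimes B)=\varphi_1(A)\otimes\varphi_2(B)$ for all $A\in M_{N_0}$ and $B\in M_{n_m}$, and restricting to $A=A_1\otimes\cdots\otimes A_{m-1}$ yields the claim. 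The main obstacle I anticipate is the bookkeeping in the commutation argument of the second step, namely verifying carefully that commuting with $I_{N_0}\otimes S$ for all \emph{real symmetric} $S$ (rather than all of $M_{n_m}$) already pins down $W_X=Z_X\otimes I_{n_m}$; once that is in place, everything else is a faithful transcription of the bipartite proof.
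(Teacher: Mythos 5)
Your proposal is correct and takes essentially the same route as the paper's proof: sum the hypothesis over the rank-one projections to show $W_X$ commutes with $I_{N_0}\otimes S$ for every real symmetric $S$, deduce $W_X=Z_X\otimes I_{n_m}$, use the partial trace $\mathrm{Tr}_1$ together with connectedness of $\{\bigotimes_{i=1}^{m-1}x_ix_i^*:\|x_i\|=1\}$ to make all the maps $\varphi_{j_1,\ldots,j_{m-1},X}$ coincide, and finish by linearity of $\phi$. The only difference is that you spell out details the paper delegates to its Lemma \ref{lemma9} (the commutation bookkeeping via $S=E_{jj}$ and $S=E_{jk}+E_{kj}$, and the well-definedness of $\varphi_1$ via a fixed $B_0$), and these details are accurate.
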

\begin{proof}
Considering all symmetric real matrices as   in the  proof of Lemma \ref{lemma9}, one can conclude that there exists some unitary matrix $Z_{X}$ such that\begin{equation}\label{eq26}
\phi\left (\bigotimes\limits_{i=1}^{m-1}X_iE_{j_ij_i}X_i^*\otimes B\right )= \left(Z_{X}\left (\bigotimes\limits_{i=1}^{m-1}E_{j_ij_i}\right )Z_{X}^*\right) \otimes \varphi_{j_1,\ldots,j_{m-1},X}(B)
\end{equation}
for all $B\in M_{n_m}$ and integers $1\leq j_i\leq  n_i $ with  $ 1\leq i\leq  m-1$. Define linear maps $\mathrm{tr_1}:M_N\to M_{n_m}$ and $\mathrm{Tr_1}:M_N\to M_{n_m}$ by $$\mathrm{tr_1}(A\otimes B)=\mathrm{tr}(A)B\quad \hbox{and} \quad \mathrm{Tr_1}(A\otimes B)=\mathrm{tr_1}(\phi(A\otimes B ))$$
for all $A\in M_{n_1\cdots n_{m-1}}$ and $B\in M_{n_m}$.
Then
$$\mathrm{Tr_1}\left(\bigotimes\limits_{i=1}^{m-1}X_iE_{j_ij_i}X_i^*\otimes B\right )=\varphi_{j_1,\ldots,j_{m-1},X}(B).$$
 Notice that $\hbox{Tr}_1$ is  linear and therefore continuous. Besides,  the set
 \begin{multline}
\left\{\bigotimes\limits_{i=1}^{m-1}X_iE_{j_ij_i}X_i^*\mid 1\leq j_i\leq n_i \hbox{ and } X_i\in M_{n_i} \hbox{ is unitary for }i=1,\ldots,m-1\right\}\\
=\left\{\bigotimes\limits_{i=1}^{m-1}x_ix_i^*\mid x_i\in \mathbb{C}^{n_i} \hbox{ with } x_i^*x_i=1 \hbox{ for }i=1,\ldots,m-1\right\}
\end{multline}
is connected. So, all the maps $\varphi_{j_1,\ldots,j_{m-1},X}$  are the same. Then (\ref{eq26}) can be rewritten as
$$\phi\left (\bigotimes\limits_{i=1}^{m-1}X_iE_{j_ij_i}X_i^*\otimes B\right )= \left(Z_{X}\left (\bigotimes\limits_{i=1}^{m-1}E_{j_ij_i}\right )Z_{X}^*\right) \otimes \varphi_{2}(B),$$
where $\varphi_2$
 is the identity map or the transposition map. With  the linearity of $\phi$, it follows that
 $$\phi(A_1\otimes \cdots \otimes A_{m-1}\otimes B)=\varphi_1(A_1\otimes \cdots \otimes A_{m-1})\otimes \varphi_2(B)$$
 for some linear map $\varphi_1.$
 \end{proof}

\par

\begin{theorem}\label{CH3theo2}
Given  an integer $m\geq 2$,
let  $n_i \geq 2 $ be integers for  $i=1,\ldots,m$ and  $N=\prod\limits_{i=1}^{m}n_i.$ Then for any real number $p>2$ and any positive integer $k \leq N,$
a linear map $\phi: M_N \to M_N$ satisfies
\begin{equation}\label{CH3th31}
\|\phi(A_1\otimes\cdots\otimes A_m)\|_{(p,k)}=\|A_1\otimes\cdots\otimes A_m\|_{(p,k)}
\end{equation}
for all  $A_i\in M_{n_i}, i=1,\ldots,m$,
if and only if there exist unitary matrices $U,V \in M_N$ such that \begin{equation}\label{CH3form}
\phi(A_1\otimes\cdots\otimes A_m)=U(\varphi_1(A_1)\otimes\cdots\otimes \varphi_m(A_m))V
\end{equation}
for all $A_i\in M_{n_i},i=1,\ldots,m,$
where $\varphi_i$ is the identity map or  the transposition map $A\mapsto A^T $ for $i=1,\ldots,m$.
\end{theorem}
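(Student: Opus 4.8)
The sufficiency is immediate, since tensoring, left/right multiplication by unitaries, and transposition all leave the singular values unchanged and hence preserve every $(p,k)$ norm; so I focus on necessity and argue by induction on $m$. The base case $m=2$ is precisely Theorem \ref{CH3theo1}. For the inductive step, I assume the conclusion for any product of $m-1$ tensor factors and write $N=Mn_m$ with $M=\prod_{i=1}^{m-1}n_i$, grouping the first $m-1$ systems into a single factor in $M_M$. The aim is to peel off the last factor $A_m\in M_{n_m}$ via Lemma \ref{lemma10}, and then to apply the induction hypothesis to the resulting map on $M_M$.

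The first and main task is to verify the hypothesis (\ref{ch}) of Lemma \ref{lemma10}: for all unitaries $X_i\in M_{n_i}$ $(1\le i\le m-1)$ and all indices $j_i$, the map $\phi$ should send $\bigotimes_{i=1}^{m-1}X_iE_{j_ij_i}X_i^*\otimes B$ to $W_X\big(\bigotimes_{i=1}^{m-1}E_{j_ij_i}\otimes\varphi_{\ldots}(B)\big)W_X^*$. The plan is to re-run the argument in the proof of Theorem \ref{CH3theo1} with the single rank-one projection $XE_{ii}X^*\in M_m$ in the first tensor slot replaced throughout by the product-form projection $P=\bigotimes_{i=1}^{m-1}X_iE_{j_ij_i}X_i^*\in M_M$, and with $YE_{jj}Y^*\in M_{n_m}$ playing the role of the second factor. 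The observation that makes this transfer work is that a tensor product of rank-one orthogonal projections is again a rank-one orthogonal projection with unique nonzero singular value $1$; consequently every singular-value computation in the proofs of Claims \ref{cl1}, \ref{cl2} and \ref{cl3} — for instance $\|T\pm xS\|_{(p,k)}^p=1+x^p$ and $\|T\|_{(p,k)}^p=1$ — is unchanged, and Corollaries \ref{CH3c1} and \ref{c2}, together with Lemma \ref{CH3le2} and Lemma \ref{adle2}, apply exactly as before. This yields the mutual orthogonality of the images of the product-form tensors, the fact that each such image is rank one, and finally, via the $\mathscr{F}$/$U_Y$ block-decomposition argument together with the norm-preserving characterization on $M_{n_m}$ from \cite{LT1988}, the required identity (\ref{ch}).

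With (\ref{ch}) in hand, Lemma \ref{lemma10} gives $\phi(A_1\otimes\cdots\otimes A_{m-1}\otimes B)=\varphi_1(A_1\otimes\cdots\otimes A_{m-1})\otimes\varphi_2(B)$ for a linear map $\varphi_1$ on $M_M$ and $\varphi_2$ the identity or the transposition. Specializing $B$ to a rank-one matrix makes $\varphi_2(B)$ rank one with singular value $1$, so the nonzero singular values of the right-hand side coincide with those of $\varphi_1(A_1\otimes\cdots\otimes A_{m-1})$; comparing $(p,k)$ norms and setting $\hat k=\min\{k,M\}$, I obtain $\|\varphi_1(A_1\otimes\cdots\otimes A_{m-1})\|_{(p,\hat k)}=\|A_1\otimes\cdots\otimes A_{m-1}\|_{(p,\hat k)}$ for all $A_i$. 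Since $1\le\hat k\le M$ and $p>2$, the induction hypothesis applies to $\varphi_1$ and produces unitaries $U',V'\in M_M$ with $\varphi_1(A_1\otimes\cdots\otimes A_{m-1})=U'\big(\varphi_1'(A_1)\otimes\cdots\otimes\varphi_{m-1}'(A_{m-1})\big)V'$, each $\varphi_i'$ the identity or the transposition. Setting $U=U'\otimes I_{n_m}$, $V=V'\otimes I_{n_m}$, $\varphi_i=\varphi_i'$ for $i\le m-1$ and $\varphi_m=\varphi_2$, the identity $(U'CV')\otimes D=(U'\otimes I_{n_m})(C\otimes D)(V'\otimes I_{n_m})$ assembles the desired form (\ref{CH3form}).

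I expect the main obstacle to lie in the transfer step: one must check that every step in the proof of Theorem \ref{CH3theo1} uses the first-slot projection \emph{only} through its being a rank-one projection, so that restricting to product-form projections (rather than to arbitrary rank-one projections of $M_M$) is harmless, and that the orthogonality relations obtained among the product-form images are exactly those required to drive the block-decomposition argument feeding Lemma \ref{lemma10}. Once this bookkeeping is confirmed, the remaining steps are routine.
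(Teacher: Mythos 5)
There is a genuine gap, and it sits exactly at the step you defer as ``bookkeeping'': the transfer of Claims \ref{cl2} and \ref{cl3} to the grouped setting fails. The hypothesis (\ref{CH3th31}) controls $\|\phi(\cdot)\|_{(p,k)}$ only on genuine product tensors $A_1\otimes\cdots\otimes A_m$. Once you group the first $m-1$ factors into a single slot $M_M$, the bipartite arguments require norm evaluations of $\phi$ at elements whose first slot is a \emph{sum} of two orthogonal rank-one projections: in Claim \ref{cl2} the first slot is $X(E_{jj}+E_{ss})X^*$, and the reduction to Claim \ref{cl3} via Lemma \ref{adle2} needs $(P+P')\otimes B$ for the two first-slot projections $P,P'$. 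But if $P=\bigotimes_{i=1}^{m-1}X_iE_{j_ij_i}X_i^*$ and $P'=\bigotimes_{i=1}^{m-1}X_iE_{j_i'j_i'}X_i^*$ differ in more than one coordinate, then $P+P'$ is \emph{not} of product form (e.g.\ $E_{11}\otimes E_{11}+E_{22}\otimes E_{22}$ is not $A\otimes B$), so (\ref{CH3th31}) gives no information about $\|\phi((P+xP')\otimes B)\|_{(p,k)}$, and the identities $\|T\pm xS\|_{(p,k)}^p=1+x^p$ on which your transfer rests are simply unavailable. The one-coordinate-difference orthogonality you can legitimately derive does not bootstrap to the full orthogonality (\ref{CH3claim}), since orthogonality is not transitive and Lemma \ref{adle2} requires knowing $(A+B)\perp C$ for the image of a non-product tensor.

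This is precisely why the paper does not re-run the bipartite proof with rank-one product projections, but instead introduces the ``fattened'' product tensors of (\ref{CH3eq33}), whose first $s-1$ factors are the rank-two projections $E_{i_ui_u}+E_{j_uj_u}$; these are honest product tensors, and Lemma \ref{adle2} is then applied coordinate by coordinate. The price is that the images $A_s(\mathrm{i,j})$ have rank $2^{s-1}$, which may exceed $k$, forcing the case split $k>2^{m-1}$ versus $k\le 2^{m-1}$; in the latter case the contradiction-from-$s_k(T)>0$ device of Claim \ref{cl1} genuinely fails, and the paper needs an inner induction on $s$ establishing the partial-isometry normal form $UA_s(\mathrm{i,j})V=I_{2^{s-1}}\oplus 0_{N-2^{s-1}}$ of (\ref{CH3cal}) through an equality analysis (all nonzero singular values forced to equal $1$) before Corollary \ref{c2} can be invoked. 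None of this appears in your outline, and it is the core of the multipartite argument rather than routine verification. Your endgame --- Lemma \ref{lemma10}, the induction hypothesis applied with $\hat k=\min\{k,M\}$, and the assembly $U=U'\otimes I_{n_m}$, $V=V'\otimes I_{n_m}$ --- does match the paper and is fine; the missing idea is the rank-$2^{s-1}$ orthogonality scheme that makes the hypothesis usable at all.
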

\begin{proof}
By Theorem 3.2 of \cite{FHLS4}, we know the result holds for $k=1$. So we may assume $k\ge 2$.

We use induction on $m$. By Theorem \ref{CH3theo1},   the result holds for $m=2$. Now suppose that $m\geq 3$ and   the result   holds for any $(m-1)$-partite system. We need to show that the result holds for any $m$-partite system.

We first show that
for  any unitary matrices $X_i\in M_{n_i},$ $i=1,\ldots,m,$
\begin{equation}\label{CH3claim}
\phi(X_1E_{i_1i_1}X_1^*\otimes\cdots \otimes X_mE_{i_mi_m}X_m^*)\perp\phi(X_1E_{j_1j_1}X_1^*\otimes \cdots\otimes X_mE_{j_mj_m}X_m^*)
\end{equation}
for any  $(i_1,\ldots,i_m)\neq (j_1,\ldots,j_m).$
Without loss of generality, we need only to prove that (\ref{CH3claim}) holds when  $X_i=I_{n_i}$  for  $i=1\ldots,m$.  By Lemma \ref{adle2}, it  suffices to show that for any integer $1\leq s\leq m,$ we have
\begin{multline}\label{CH3eq33}
\phi\left (\bigotimes\limits_{u=1}^{s-1}(E_{i_ui_u}+E_{j_uj_u})\otimes E_{i_si_s}\otimes \bigotimes\limits_{u=s+1}^mE_{i_ui_u}\right )\\
\perp \phi\left (\bigotimes\limits_{u=1}^{s-1}(E_{i_ui_u}+E_{j_uj_u})\otimes E_{j_sj_s}\otimes \bigotimes\limits_{u=s+1}^mE_{i_ui_u}\right )
\end{multline}
for all $\mathrm{i}=(i_1,\ldots,i_m)$ and $\mathrm{j}=(j_1,\ldots,j_m)$ with $i_u\neq j_u$ for $ u=1,\ldots, s.$ Denote by $A_s(\mathrm{i,j})$ and $B_s(\mathrm{i,j})$ the  two matrices in (\ref{CH3eq33}) accordingly.  It is easy to check that for any integer $1\leq s\leq m$ and real number $0<x<1$,
\begin{multline}\notag
\|A_s(\mathrm{i,j})+xB_s(\mathrm{i,j})\|_{(p,k)}^p+\|A_s(\mathrm{i,j})
-xB_s(\mathrm{i,j})\|_{(p,k)}^p\\
\leq 2\|A_s(\mathrm{i,j})\|^p_{(p,k)}+2\|xB_s(\mathrm{i,j})\|^p_{(p,k)}.\notag
\end{multline}
Then applying the same argument as in the proof of Theorem \ref{CH3theo1}, we conclude that  for any integer $1\leq s \leq m$ and real number $0<x<1$,  
 \begin{multline}\label{CH3eq411}
\sum_{i=1}^k\lambda_i^{\frac{p}{2}}\big(A_s^*(\mathrm{i,j})A_s(\mathrm{i,j}\big)+x^2B_s^*(\mathrm{i,j})B_s(\mathrm{i,j})\big)\\
\leq \sum_{i=1}^k\lambda_i^{\frac{p}{2}}\big(A^*_s(\mathrm{i,j})A_s(\mathrm{i,j})\big)+\sum_{i=1}^k\lambda_i^{\frac{p}{2}}\big(x^2B_s^*(\mathrm{i,j})B_s(\mathrm{i,j})\big)
\end{multline}
and
 \begin{multline}\label{CH3eq41}
\sum_{i=1}^k\lambda_i^{\frac{p}{2}}\big(A_s(\mathrm{i,j})A^*_s(\mathrm{i,j})+x^2B_s(\mathrm{i,j})B_s^*(\mathrm{i,j})\big) \\\leq \sum_{i=1}^k\lambda_i^{\frac{p}{2}}\big(A_s(\mathrm{i,j})A^*_s(\mathrm{i,j})\big)+\sum_{i=1}^k\lambda_i^{\frac{p}{2}}\big(x^2B_s(\mathrm{i,j})B^*_s(\mathrm{i,j})\big)
\end{multline}
for all  $\mathrm{i}=(i_1,\ldots,i_m)$ and $\mathrm{j}=(j_1,\ldots,j_m)$ with $i_u\neq j_u$ for $ u=1,\ldots, s.$ Now we distinguish two cases.\\

\noindent\textit{Case 1.} Suppose  $k>2^{m-1}$.  Then
 \begin{equation}\label{CH3eq34}
\|A_s(\mathrm{i,j})+xB_s(\mathrm{i,j})\|_{(p,k)}^p=2^{s-1}+ a_s x^p\quad \hbox{ for\quad }0<x<1,
\end{equation}
where $a_s=2^{s-1}$ for $s=1,\ldots,m-1$ and $a_m=\min\{k-2^{m-1},2^{m-1}\}$.
 We claim  that
 \begin{equation}\label{eqh714}
 s_k(A_{s}(\mathrm{i,j}))=0 {\rm \quad for \quad}  s=1,\ldots,m.
  \end{equation} Otherwise, $s_k(A_s(\mathrm{i,j}))>0$ for some $1\leq s\leq m$.
 Then by (\ref{CH3eq411}) and  (\ref{CH3eq41}), we can use the same argument as in Claim \ref{cl1} in the proof of Theorem \ref{CH3theo1} to conclude that there exists a sufficiently small $x>0$ such that
 $$\|A_s(\mathrm{i,j})+xB_s(\mathrm{i,j})\|_{(p,k)}^p=\|A_s(\mathrm{i,j})\|_{(p,k)}^p=2^{s-1},$$ which  contradicts (\ref{CH3eq34}). Thus,  (\ref{eqh714}) holds. Applying Corollary \ref{c2},  we have $$A_s(\mathrm{i,j})\perp B_s(\mathrm{i,j}) {\rm\quad for \quad}
  s=1,\ldots,m.$$

\noindent\textit{Case 2.}
Suppose  $k\leq 2^{m-1}$.
Let $s_0$ be the integer such that $2^{s_0-1}< k\leq 2^{s_0}.$ We can use the same argument as in \textit{Case 1} to show that for any integer $1\leq s\leq s_0$, \begin{equation}\label{CH3eq36}
A_{s}(\mathrm{i,j})\perp B_s(\mathrm{i,j})\quad\hbox{ and}\quad s_k(A_s(\mathrm{i,j}))=0
\end{equation} for all $\mathrm{i}=(i_1,\ldots,i_m)$  and $\mathrm{j}=(j_1,\ldots,j_m)$ with $i_u\neq j_u$ for  $u=1,\ldots,s.$

Next, we use induction on $s$ to prove that  for
 any  $s_0+1\leq s \leq m,$  $\mathrm{i}=(i_1,\ldots,i_m
)$  and $\mathrm{j}=(j_1,\ldots,j_m)$ with $i_u\neq j_u$ for $u=1,\ldots, s,$
 there exist unitary matrices $U,V\in M_N$
  depending on $s$ and $(\mathrm{i,j})$ such that
\begin{equation}\label{CH3cal}
UA_{s}(\mathrm{i,j})V=I_{2^{s-1}}\oplus 0_{N-2^{s-1}}\quad \hbox{and}\quad
A_{s}(\mathrm{i,j})\perp B_s(\mathrm{i,j}).
\end{equation}
First
by (\ref{CH3eq36}), we have $A_{s_0}(\mathrm{i,j})\perp B_{s_0}(\mathrm{i,j})$ and there exist unitary matrices $U,V\in M_N$  and an integer $0\leq r< k$ such that
$$UA_{s_0}(\mathrm{i,j})V=\mathrm{diag}(a_1,\ldots,a_r)\oplus 0$$
and  $$UB_{s_0}(\mathrm{i,j})V=0_r\oplus \mathrm{diag}(b_{r+1},\ldots,b_N),$$
with $a_1\geq \cdots \geq a_r>0$ and $b_{r+1}\geq \cdots\geq b_N\geq 0$. If $a_1>1$, then by (\ref{CH3eq36}),   applying Lemma \ref{adle2} we have $s_1\Big(\phi\Big(\bigotimes\limits_{u=1}^mE_{i_ui_u}\Big)\Big)>1$ for some $(i_1,\ldots,i_m)$. It follows that $\Big\|\phi\Big(\bigotimes\limits_{u=1}^mE_{i_ui_u}\Big)\Big\|_{(p,k)}>1$, which contradicts
(\ref{CH3th31}). Thus, $a_1\leq 1$, and similarly $b_{r+1}\leq 1.$
Then we have \begin{equation}\label{CH3eq42}
\sum_{j=1}^ra_j^p+\sum_{j=r+1}^kb_j^p\leq k.
\end{equation}
 Clearly, $a_1\geq \cdots\geq a_r\geq xb_{r+1}\geq \cdots \geq xb_k$ are the largest $k$ singular values of $A_{s_0}(\mathrm{i,j})+xB_{s_0}(\mathrm{i,j})$ for all $0<x\leq \frac{a_r}{b_{r+1}}$. Hence, 
$$\|A_{s_0}(\mathrm{i,j})+xB_{s_0}(\mathrm{i,j})\|^p_{(p,k)}=\sum_{j=1}^ra_j^p+x^p\sum_{j=r+1}^kb_j^p\quad \hbox{for all\quad}0< x\leq \frac{a_r}{b_{r+1}}.$$
On the other hand, with (\ref{CH3th31}), we have
$$\|A_{s_0}(\mathrm{i,j})+xB_{s_0}(\mathrm{i,j})\|^p_{(p,k)}=2^{s_0-1}+x^p(k-2^{s_0-1})\quad \hbox{for all\quad}0<x\leq 1.$$
It follows from the above two equations that
$$\displaystyle\sum_{j=1}^ra_j^p=2^{s_0-1} \quad{\rm and \quad} \displaystyle\sum_{j=r+1}^kb_j^p=k-2^{s_0-1}.$$ Therefore, $ \sum_{j=1}^ra_j^p+ \sum_{j=r+1}^kb_j^p=k,$ i.e., the equality in (\ref{CH3eq42}) holds, which implies that $a_j=1$ for
$j=1,\ldots,r$. Notice that $\|A_{s_0}(\mathrm{i,j})\|_{(p,k)}^p=2^{s_0-1}$. Thus, $r=2^{s_0-1},$ that is, $UA_{s_0}(\mathrm{i,j})V=I_{2^{s_0-1}}\oplus 0_{N-2^{s_0-1}}$. Hence,  (\ref{CH3cal}) holds for $s_0$.

 Suppose that (\ref{CH3cal}) holds for  $s-1$ with $s_0< s\leq m.$ We will show that (\ref{CH3cal}) also holds for $s$.
Notice that
 $A_{s}(\mathrm{i,j})=A_{s-1}(\mathrm{\hat{i},\hat{j}})+B_{s-1}(\mathrm{\hat{i},\hat{j}})$ for some $\mathrm{\hat{i}}=(\hat{i}_1,\ldots,\hat{i}_m)$  and $\mathrm{\hat{j}}=(\hat{j}_1,\ldots,\hat{j}_m)$ with $\hat{i}_u\neq \hat{j}_u$ for $u=1,\ldots, s-1.$
By our assumption, we have
$$UA_{s-1}(\mathrm{\hat{i},\hat{j}})V=I_{2^{s-2}}\oplus 0_{N-2^{s-2}}\quad\hbox{and}\quad UB_{s-1}(\mathrm{\hat{i},\hat{j}})V=0_{2^{s-2}}\oplus I_{2^{s-2}}\oplus 0_{N-2^{s-1}}$$ for some unitary
matrices $U,V\in M_N$. It follows that
$$UA_{s}(\mathrm{i,j})V=I_{2^{s-1}}\oplus 0_{N-2^{s-1}}.$$ Then with (\ref{CH3eq411}) and (\ref{CH3eq41}), we apply Corollary  \ref{c2}   to conclude that
$$UB_s(\mathrm{i,j})V=0_{2^{s-1}}\oplus \hat{B}$$
for some $\hat{B}\in M_{N-2^{s-1}}$.
It follows that $A_s(\mathrm{i,j})\perp B_s(\mathrm{i,j}).$ Now we have proved that (\ref{CH3cal}) holds for $s$. Then we can conclude from the above discussion that for any $s=1,\ldots,m$,
$$A_s(\mathrm{i,j})\perp B_s(\mathrm{i,j})$$ for all   $\mathrm{i}=(i_1,\ldots,i_m)$  and $\mathrm{j}=(j_1,\ldots,j_m)$ with $i_u\neq j_u$ for $ u=1,\ldots, s,$  that is, (\ref{CH3claim}) holds.

It follows that for any unitary matrix $X_m\in M_{n_m}$, there exist unitary matrices $U_{X_m}$ and $V_{X_m}$  such that
\begin{equation}\label{ch241}
\phi\left(\bigotimes\limits_{i=1}^{m-1} E_{j_ij_i}\otimes X_mE_{j_{m}j_{m}}X_m^*\right)=
 U_{X_m}(E_{j_1j_1}\otimes \cdots \otimes E_{j_mj_m})V_{X_m}^*
 \end{equation}
 for all $ j_i=1,\ldots, n_i$ with $1 \leq i\leq m.$ For simplicity, we may assume that $U_{I}=V_{I}=I$, i.e.,
\begin{equation}\label{ch242}
\phi(E_{j_1j_1}\otimes\cdots\otimes E_{j_{m}j_{m}})=
E_{j_1j_1}\otimes \cdots \otimes E_{j_mj_m}.
\end{equation}
 Then we have $\phi(I_N)=I_N$.
Applying a similar argument as in the last two paragraphs of the proof of Theorem \ref{CH3theo1} one can conclude from (\ref{ch241}) and (\ref{ch242}) that  there are unitary matrices $W,\widetilde{W}\in M_N$ such that for all  $  j_i=1,\ldots,n_i$ with $1\leq i\leq m-1$,
$$\phi\left(\bigotimes\limits_{i=1}^{m-1}E_{j_ij_i}\otimes B\right)=
W\left(\bigotimes\limits_{i=1}^{m-1}E_{j_ij_i}\otimes \varphi_{j_1,\ldots,j_{m-1}}(B)\right)\widetilde{W},$$
where $\varphi_{j_1,\ldots,j_{m-1}}$ is the identity map or the transposition map. It follows that  $\phi(I_N)=W\widetilde{W}$. Recall that $\phi(I_N)=I_N$ and $W$ and $\widetilde{W}$ are both unitary matrices.
We have $\widetilde{W}=W^*.$

Following a similar argument as above, one can show that for any $X=(X_1,\ldots,X_{m-1})$ 
and any integer  $  j_i=1\ldots,  n_i$ with $1\leq i\leq m-1$, there exists a  unitary matrix $W_X\in M_N$ such that
\begin{equation}\label{ch243}
\phi\left (\bigotimes\limits_{i=1}^{m-1}X_iE_{j_ij_i}X_i^*\otimes B\right )=W_X\left (\bigotimes\limits_{i=1}^{m-1}E_{j_ij_i}\otimes\varphi_{j_1,\ldots,j_{m-1},X}(B)\right )W_X^*
\end{equation}
for all $B\in M_{n_m},$
where $\varphi_{j_1,\ldots,j_{m-1},X}$ is the
 identity map or transposition map. Denote $\mathrm{I}=(I_{n_1},\ldots,I_{n_{m-1}}).$
For simplicity, we may further assume that
 $W_{\mathrm{I}}=I_N$, i.e., for  any integer $j_i=1,\ldots,  n_i$ with $1\leq i\leq m-1$,
\begin{equation}\notag
\phi\left (\bigotimes\limits_{i=1}^{m-1}E_{j_ij_i}\otimes B\right )=\bigotimes\limits_{i=1}^{m-1}E_{j_ij_i}\otimes\varphi_{j_1,\ldots,j_{m-1},\mathrm{I}}(B)\quad \hbox{for all\quad} B\in M_{n_m}.
\end{equation}
Then we apply Lemma \ref{lemma10} to conclude that
$$\phi(A_1\otimes \cdots\otimes A_{m-1}\otimes B)=\psi(A_1\otimes \cdots\otimes A_{m-1})\otimes \varphi_m(B)$$
for all $B\in M_{n_m}$  and  $A_i\in M_{n_i}$, $1\leq i\leq m-1$, where $\varphi_m$ is the identity map or the transposition map and $\psi$ is a linear map.
Let $\hat{k}=\min\Big\{k,\prod\limits_{i=1}^{m-1}n_i\Big\}.$
 It is easy to check that  $$\|\psi(A_1\otimes\cdots \otimes A_{m-1})\|_{(p,\hat{k})}=\|A_1\otimes\cdots \otimes A_{m-1}\|_{(p,\hat{k})}$$
 for all 
 $A_i \in M_{n_i}, i=1,\ldots,m-1.$
Then by the induction hypothesis, we conclude that there exist unitary matrices $\widetilde{U}$ and 
$\widetilde{V} $ such that
$$\psi(A_1\otimes\cdots \otimes A_{m-1})=\widetilde{U}(\varphi_1(A_1)\otimes \cdots\otimes \varphi_{m-1}(A_{m-1}))\widetilde{V},$$
where $\varphi_i$ is the identity map or the transposition map for $i=1,\ldots,m-1.$
 Therefore $\phi$ has the desired  form and the proof is  completed.
\end{proof}

\section{Conclusion and remarks}

In this paper, we determined the structures of linear maps  on $M_{mn}$  that preserve the $(p,k)$-norms of tensor products of matrices for $p>2$ and $1\leq k\leq mn$. Our study generalized the results in  \cite{FHLS4}
concerning the maps preserving the Ky Fan $k$-norm and and Schatten $p$-norms.
Furthermore, we have also extended the result on bipartite systems to multipartite systems using mathematical induction.

The proofs of the main results in  \cite{FHLS4} heavily rely on Lemmas 2.2 and 2.7 from the same paper, which specifically address Ky Fan $k$-norms and Schatten $p$-norms.  However,  it is important to note that these two lemmas are not applicable for $(p,k)$-norms.  To overcome this limitation, we derived two inequalities involving the eigenvalues of Hermitian matrices and $(p,k)$-norms,  which are presented in Lemma \ref{CH3lemma3} and its corollary. These two inequalities play a crucial role in our proofs.

To characterize linear maps that preserve the $(p, k)$-norms of tensor products of matrices for $1< p\leq 2$,  we attempted to derive an analogue to the inequality (\ref{le2.7}). However, as demonstrated in Remark \ref{remark},  
this analogous  inequality does not hold in general. Consequently the techniques employed in this paper are not able to address the case  $1<p\leq 2$. Therefore, novel approaches need to be introduced to tackle this particular scenario.

\end{document}